\tikzset{
  main node/.style={circle,fill=black,draw,minimum size=2mm,inner sep=0pt},
}
\theoremstyle{plain}
\newtheorem{theorem}{Theorem}[section]
\newtheorem{proposition}[theorem]{Proposition}
\theoremstyle{definition}
\theoremstyle{remark}
\title{Parallel Interior-Point Solver for Block-Structured Nonlinear Programs on SIMD/GPU Architectures}
\author{
  \name{
  François Pacaud\textsuperscript{a},
  Michel Schanen\textsuperscript{b},
  Sungho Shin\textsuperscript{b},
  Daniel Adrian Maldonado\textsuperscript{b},
  Mihai Anitescu\textsuperscript{b}
}
  \affil{
    \textsuperscript{a} Centre Automatique et Systèmes, Mines Paris - PSL, Paris, France;
    \textsuperscript{b} Mathematics and Computer Science Department, Argonne National Laboratory, Lemont, USA}
}
\date{\today}
\begin{document}
\maketitle

\begin{abstract}
  We investigate how to port the standard interior-point
  method to new exascale architectures for block-structured nonlinear programs
  with state equations.
  Computationally, we decompose the interior-point algorithm
  into two successive operations: the evaluation of the derivatives and
  the solution of the associated Karush-Kuhn-Tucker (KKT) linear system.
  Our method accelerates both operations using two levels of parallelism.
  First, we distribute the computations on multiple processes using
  coarse parallelism.
  Second, each process uses a SIMD/GPU accelerator locally
  to accelerate the operations using fine-grained parallelism.
  The KKT system is reduced by eliminating the inequalities and the state variables from the corresponding equations,
  to a dense matrix encoding the sensitivities of the problem's degrees of freedom,
  drastically minimizing the memory exchange.
  We demonstrate the method's capability on the supercomputer Polaris,
  a testbed for the future exascale Aurora system. Each node is equipped
  with four GPUs, a setup amenable to our two-level approach.
  Our experiments on the stochastic optimal power flow problem
  show that the method can achieve a 50x speed-up compared to the state-of-the-art
  method.
\end{abstract}

\section{Introduction}

Solving complex engineering problems often resorts
to the solution of large-scale block-structured nonlinear programs.
As such, there has been a long interest in designing efficient nonlinear optimization
algorithms, particularly by using parallel computing.
Parallelism can happen at two levels.
At first, \emph{coarse parallelism}
splits the program into large computational chunks, usually dispatched to multiple processors
using a message-passing interface in distributed memory.
In this paradigm, the parallel algorithm is designed to minimize the communication
between the different processes.
In a complementary direction, \emph{fine-grained parallelism} breaks down the program
into small tasks, fast to compute in shared memory. This method requires
a large number of processors to be efficient, and it is usually better
on SIMD architectures with low communication overhead, as provided by Graphical Processing
Units (GPUs).
In the mathematical optimization community, coarse parallelism
has traditionally been used to solve large-scale block-structured optimization
problems, as encountered in dynamic or stochastic nonlinear programs.
On the contrary, fine-grained parallelism has gained attraction only
recently, with the renewed interests
for machine learning applications and stochastic gradient algorithms.
In this work, we combine coarse and fine-grained parallelism
to solve block-structured nonlinear problems on new exascale architectures, where the solution algorithm
is streamlined on different GPUs using CUDA-aware MPI.

\subsection{Literature review}
In his pioneering work~\cite{schnabel1985parallel,schnabel1995view}, Robert Schnabel identified three practical approaches
to run optimization algorithms in parallel:
(i) parallelize the function evaluations;
(ii) parallelize the linear algebra; and
(iii) parallelize the optimization algorithm itself.

The first attempt to parallelize the evaluations has been
to streamline the computation of the derivatives using finite-differences~\cite{nash1991general}. Soon, it has been noted that parallelizing the forward pass
in automatic differentiation (AD) is also straightforward, provided that we can propagate
the tangents (encoding the first-order sensitivity) in parallel~\cite{hovland1997automatic}.
Unfortunately, doing the same in the reverse pass is not trivial,
as adjoining a mutable code leads to race conditions
(e.g., every read becomes a write operation).
This has led to extensive research on adapting automatic differentiation
to parallel environments~\cite{bucker2001bringing,hovland1998automatic,moses2021reverse}.
Now, most state-of-the-art differentiable tools employ a Domain Specific Language (DSL)
constraining the user to specific differentiable operations.
In particular, this approach has been adopted mainly in machine learning,
leading to the development of fast AD libraries efficiently generating the derivatives
efficiently on hardware accelerators such as GPUs or TPUs~\cite{paszke2019pytorch,jax2018github}.

The parallelization of linear algebra is usually more involved,
as most large-scale optimization methods fall back on the solution of sparse indefinite
Karush-Kuhn-Tucker (KKT) systems~\cite{nocedal2006numerical}.
In the 1980s, preliminary results were obtained by running
iterative methods in parallel, using block-Krylov~\cite{saad1980rates} or block-truncated Newton methods \cite{nash1989block}.
However, block iterative algorithms are quickly limited by the lack of
generic preconditioners for KKT systems.
The 1990s witnessed the emergence of
the interior-point methods (IPM), together with the development of
large-scale sparse direct linear solvers
\cite{duff2004ma57,schenk2004solving}.
In IPM, a significant portion of the time is spent solving a sequence of
(indefinite) KKT systems, hence the method directly benefits from efficient
sparse linear solvers able to run in parallel \cite{amestoy2000multifrontal,duff1999developments}.
In the 2000s, it was shown that, for block-structured optimization problems as we consider here, the layout of the optimization problem
can be exploited further in a Schur complement approach to solve the Newton step
in parallel~\cite{birge1988computing,choi1993exploiting,jessup1994parallel,gondzio2003parallel,zavala2008interior,word2014efficient,petra2014augmented}.
These developments led to the development of mature decomposition-based parallel nonlinear solvers for scenario-based problems
in the 2010s~\cite{gondzio2009exploiting,chiang2014structured,zhu2009exploiting,rodriguez2021scalable,shin2021graph}.

Eventually, running an optimization algorithm fully in parallel generally requires a
subtle combination of (i) and (ii), often devolving to
a software engineering problem. The challenge is to evaluate the
derivatives \emph{and} solve the resulting KKT system each in parallel; all this
while minimizing the communication between the different processes.
This has led to the development of different prototypes for MPI-parallel modelers
\cite{colombo2009structure,watson2012pysp,huchette2014parallel,rodriguez2021scalable},
most of them extending a specific AD backend \cite{fourer1990modeling,bussieck2004general,dunning2017jump}.
Such approaches have been successfully applied
to solve large-scale block-structured nonlinear problems, as encountered
in stochastic programming and dynamic optimization.


\subsection{Contributions}
In this article, we introduce a new parallel algorithm to solve block-structured nonlinear programs involving state equations
on exascale supercomputers.
Our algorithm uses the parallel interior-point solver MadNLP~\cite{shin2021graph}, using
two layers of parallelism to streamline both the evaluation of the derivatives
and the solution of the KKT system.
This framework targets new exascale supercomputers,
where each node is assigned to multiple GPUs connected with
a unified memory (designed to have fast memory exchange between
the different GPUs).

We demonstrate the capability of the algorithm on scenario-based power flow problems (block-OPF), here formulated as two-stage stochastic nonlinear programs. The scenarios can be stochastic or represent contingencies (which can be interpreted as stochastic outcomes with uniform distribution), as is the case of the very widely used security-constrained AC optimal power flow (SC-ACOPF) problem \cite{capitanescu2011state}. SC-ACOPF is one of the core analyses undertaken in the planning, operational planning, and real-time operation of transmission systems \cite{capitanescu2011state}. SC-ACOPF is run several times a day by many operators in the US and the world. For brevity, we will refer to such problems as stochastic.

The block structure of such problems is given by the different scenarios associated with
the stochastic problem, leading to potential parallelism in both
the evaluation of the derivatives and the solution of the resulting block-angular KKT system.
The parallel solution of the block-OPF problem with a Schur complement approach has been studied
extensively both with PIPS-NLP~\cite{chiang2014structured,schanen2018toward} (multiprocessing)
and with Beltistos~\cite{kardovs2019two,kardovs2022beltistos} (multiprocessing
+ factorization of the dense Schur complement on the GPU). Compared to the state-of-the-art
solver Beltistos, our approach carries out almost all computation on the GPUs including a global
CUDA-aware MPI reduction, from the evaluation of the derivatives to the assembling of the Schur complement.
We test our implementation on the pre-exascale supercomputer Polaris, where each node
is equipped with 4 A100 GPUs, and we solve block-OPF problems with up to 9,251 nodes.
\section{Problem statement}

In systems engineering, it is common to encounter optimization problems with relatively
few degrees of freedom -- "controls". Then, the goal is to appropriately fix the values for the degrees of
freedom, e.g., by minimizing a given operational cost while satisfying
the physical equations of the problem.
In that context, the internal state of the system
is described by a \emph{state} variable $x \in \mathbb{R}^{n_x}$,
whose values depend on the current \emph{controls} $u \in \mathbb{R}^{n_u}$
associated with the problem's degrees of freedom.
If the problem is well-posed, this translates to
the \emph{state equation} $g(x, u) = 0$, where
the function $g$ exhibits the physical structure of the problem
(e.g., a differential equation encoding a dynamics,
or a nonlinear network flow associated with static balance equations).
When the system faces uncertainties, it is often appropriate to choose
a control $u$ feasible under a finite set of conditions (or scenarios).
That is, the control $u$ must satisfy $N$ different state equations
\begin{equation}
  \label{eq:state_equation}
  g_i(x_i, u) = 0 \quad \text{for all} \quad i = 1, \cdots, N ,
\end{equation}
where the state $x_i$ now depend on the current scenario $i$. The variables
$x_i$ can be assimilated into a recourse variable.
The $N$ functions $g_1, \cdots, g_N$ define the block structure
of the problem.

\subsection{Block-structured nonlinear programs}
In addition to satisfying the $N$ state equations~\eqref{eq:state_equation},
we aim at minimizing the average operating costs on the $N$ different
scenarios.
The corresponding problem formulates as a two-stage nonlinear program,
which, in our case, is a nonlinear program with
\emph{partially separable structure} \cite{demiguel2008decomposition}:
\begin{equation}
  \label{eq:originalproblem}
  \min_{\substack{x_1, \cdots, x_N, \\ u}} \; \sum_{i=1}^N f_i(x_i, u)
  \quad \text{s.t.} \quad
  \left\{
  \begin{aligned}
    & x_i \geq 0 \; , \quad  u \geq 0 \\
    & g_i(x_i, u) = 0 \;, \\
    & h_i(x_i, u) \leq 0 \; ,
  \end{aligned}
  \right.
  \quad \forall i = 1, \cdots, N \; ,
\end{equation}
with $f_i: \mathbb{R}^{n_x} \times \mathbb{R}^{n_u} \to \mathbb{R}$,
$g_i:  \mathbb{R}^{n_x} \times \mathbb{R}^{n_u} \to \mathbb{R}^{n_x}$,
$h_i:  \mathbb{R}^{n_x} \times \mathbb{R}^{n_u} \to \mathbb{R}^{m}$ smooth functions
encoding the objective, the state equations, and the operational constraints, respectively.
We note that the number of variables ($N \times n_x + n_u$) and constraints
($N \times (m+n_x)$) are linearly proportional to the number of blocks $N$.

In addition, if we introduce local control variables $u_1, \cdots, u_N$ with
the additional coupling constraint $u_1 = \cdots = u_N = u$, we get
a problem with a separable structure, solvable using the primal
decomposition method; at the expense of increasing the search space \cite{ruszczynski1993interior,demiguel2008decomposition}.

By introducing slack variables $s_1, \cdots, s_N$, we rewrite \eqref{eq:originalproblem}
in standard form:
\begin{equation}
  \label{eq:slackedproblem}
  \min_{\substack{x_1, \cdots, x_N, \\ s_1, \cdots, s_N,\\  u}} \; \sum_{i=1}^N f_i(x_i, u)
  \quad \text{s.t.} \quad
  \left\{
  \begin{aligned}
    & u \geq 0 \; , \quad  x_i \geq 0  \; , \quad s_i \geq 0 \\
    & g_i(x_i, u) = 0 \;, \\
    &  h_i(x_i, u) + s_i = 0 \; ,
  \end{aligned}
  \right.
  \quad \forall i = 1, \cdots, N \; .
\end{equation}

We define $y_i \in \mathbb{R}^{n_x}$ the multipliers (or \emph{adjoints}) associated
to the equality constraints $g_i(x_i, u) = 0$,
$z_i \in \mathbb{R}^{m}$ the multipliers associated
to the operational constraints $h_i(x_i, u) + s_i = 0$,
as well as $\lambda, \kappa_i, \nu_i$ the three multipliers
associated to the respective bound constraints $u \geq 0, x_i \geq 0, s_i \geq 0$.
The Lagrangian associated to \eqref{eq:slackedproblem} is:
\begin{multline}
  \label{eq:lagrangian}
  L(x, u, s; y, z, \lambda, \mu, \nu) := \\ \sum_{i=1}^N \Big[ f_i(x_i, u)
    + y_i^\top g_i(x_i, u) + z_i^\top \big(h_i(x_i, u) + s_i\big)
    - \kappa_i x_i - \nu_i s_i
  \Big] - \lambda u \; ,
\end{multline}
with $x := (x_1, \cdots, x_N)$,
$s := (s_1, \cdots, s_N)$,
$y := (y_1, \cdots, y_N)$,
$z := (z_1, \cdots, z_N)$.
To simplify the notations, we define the \emph{extended}
objective function and the \emph{extended} constraints:
\begin{equation*}
  f(x, u) := \sum_{i=1} f_i(x_i, u) \; , \quad
g(x, u) := \begin{bmatrix}
  g_1(x_1, u) \\
  \vdots \\
  g_N(x_N, u)
\end{bmatrix}
\;, \quad
h(x, u) := \begin{bmatrix}
  h_1(x_1, u) \\
  \vdots \\
  h_N(x_N, u)
\end{bmatrix}
.
\end{equation*}
We assume the functions $f, g, h$ are twice differentiable.
We denote
\begin{equation*}
  \begin{aligned}
    & H = \partial_{(x, u)} h(x, u) \in \mathbb{R}^{Nm \times (Nn_x+n_u)}& \text{\small Jacobian of the inequality cons.} \\
    & G = \partial_{(x, u)} g(x, u) \in \mathbb{R}^{Nn_x \times (Nn_x+n_u)}& \text{\small Jacobian of the equality cons.} \\
      & W = \nabla_{(x, u)}^2 L(x, u, s; \cdot)
      \in \mathbb{R}^{(Nn_x+n_u) \times (Nn_x+n_u)}
    & \text{\small Hessian of Lagrangian.}
  \end{aligned}
\end{equation*}

\subsection{Interior-point method}
The interior-point method (IPM) \cite[Chapter 19]{nocedal2006numerical} is a classical
approach to solve~\eqref{eq:slackedproblem}.

\subsubsection{KKT system}
The Karush-Kuhn-Tucker (KKT) equations associated to~\eqref{eq:slackedproblem} can be expressed as
\begin{subequations}
  \label{eq:kkt:fullspace}
\begin{align}
  & \nabla_x f_i + (G_x^i)^\top y_i + (H_x^i)^\top z_i - \kappa_i = 0  , & \forall i=1,\cdots,N\\
  & \sum_{i=1}^N \Big(\nabla_u f_i + (G_u^i)^\top y_i + (H_u^i)^\top z_i \Big) - \lambda = 0  , & \text{(coupling)}\\
  & z_i - \nu_i = 0  ,& \forall i=1,\cdots,N \\
  & g_i(x_i, u) = 0  , & \forall i=1,\cdots,N\\
  & h_i(x_i, u)  + s_i = 0,  & \forall i=1,\cdots,N\\
  \label{eq:kkt:cond1}
  & X_i \kappa_i = 0, \; (x_i,\kappa_i) \geq 0,& \forall i=1,\cdots,N \\
  \label{eq:kkt:cond2}
  & S_i \nu_i = 0, \; (s_i,\nu_i) \geq 0, & \forall i=1,\cdots,N\\
  & U \lambda= 0, \; (u, \lambda) \geq 0,
  \label{eq:kkt:cond3}
\end{align}
\end{subequations}
where $U = \text{diag}(u)$, $X_i = \text{diag}(x_i)$, $S_i = \text{diag}(s_i)$.

The interior-point method uses a homotopy parameter $\mu > 0$ to
replace the complementarity constraints~\eqref{eq:kkt:cond1}-\eqref{eq:kkt:cond2}-\eqref{eq:kkt:cond3}
by the smooth approximations: $X_i \kappa_i = \mu e_{n_x}, S_i \nu_i = \mu e_{m},
U \lambda = \mu e_{n_u}$ ($e_n$ being the vector of all ones of dimension $n$).
The resulting (smooth) system of nonlinear equations can be solved iteratively
using Newton method, where at each iteration, the descent direction
is updated by solving the following \emph{augmented} linear system:
\begin{equation}
  \label{eq:kkt:augmented}
  \begin{bmatrix}
    W + \Sigma_{p} & 0 & G^\top & H^\top \\
    0 & \Sigma_s & 0 & I \\
    G & 0 & 0 & 0\\
    H & I & 0 & 0
  \end{bmatrix}
  \begin{bmatrix}
    p_{d} \\
    p_{s} \\
    p_{y} \\
    p_{z}
  \end{bmatrix}
  =
  -
  \begin{bmatrix}
    r_{1} \\
    r_{2} \\
    r_{3} \\
    r_{4}
  \end{bmatrix}
\end{equation}
with $r_1 =
\begin{bmatrix}
  \nabla_x f + G_x^\top y + H_x^\top z - \mu X^{-1} e_{n_x} \\
  \nabla_u f + G_u^\top y + H_u^\top z - \mu U^{-1} e_{n_u}
\end{bmatrix}
$,
$r_2= z - \mu S^{-1} e_m$, $r_3 = g(x, u)$, $r_4 = h(x, u) + s$.
The primal descent direction $p_d$ decomposes as
$p_d = (p_{x_1}, \cdots, p_{x_N}, p_u)$.

\subsubsection{Block angular structure}
The linear system~\eqref{eq:kkt:augmented} is sparse and symmetric indefinite,
and can be factorized using the Bunch-Kaufman algorithm.
However, it is often beneficial to exploit its block-angular structure.
Indeed, both the Hessian of the Lagrangian and the Jacobians
have a block-angular structure, given as
\begin{equation*}
  W = \begin{bmatrix}
    W_{x_1 x_1} &  && W_{x_1u}\\
     & \ddots & & \vdots \\
      &  & W_{x_N x_N} & W_{x_N u} \\
    W_{u x_1} & \hdots & W_{u x_N} & W_{uu}
  \end{bmatrix}
  , \quad
  G = \begin{bmatrix}
    G^1_{x_1} & &  &  G^1_{u} \\
            & \ddots && \vdots \\
             & & G^N_{x_N} &  G^N_{u}
  \end{bmatrix}
  \; .
\end{equation*}
By reordering the linear system \eqref{eq:kkt:augmented}, we can
expose the block-angular structure of the KKT system as:
\begin{equation}
  \label{eq:kkt:arrowheadsystem}
  \begin{bmatrix}
    A_1 & & & B_1^\top \\
    & \ddots & & \vdots \\
    &   & A_N & B_N^\top \\
    B_1 & \hdots & B_N & A_0
  \end{bmatrix}
\end{equation}
with
\begin{equation*}
  A_0 = W_{uu}
  , \quad
  A_i = \begin{bmatrix}
    W_{x_ix_i} +\Sigma_{x_i} & 0 & G_{x_i}^\top & H_{x_i}^\top \\
    0 & \Sigma_{s_i} & 0 & I  \\
    G_{x_i} & 0 & 0 & 0 \\
    H_{x_i} & I & 0 & 0
  \end{bmatrix}
  , \quad
  B_i = \begin{bmatrix}
    W_{x_iu} \\
    (G_{u}^i)^\top \\
    (H_{u}^i)^\top
  \end{bmatrix}^\top \; .
\end{equation*}
The block-angular structure~\eqref{eq:kkt:arrowheadsystem} can be
exploited to solve the KKT linear system in parallel using a
Schur complement approach. In that case, the submatrices $A_i$
can be factorized independently to assemble the Schur complement in parallel~\cite{chiang2014structured}.

\subsection{Condensation and reduction}
\label{sec:problem:kkt}
Instead of reordering the augmented KKT system~\eqref{eq:kkt:augmented}
as a block angular matrix~\eqref{eq:kkt:arrowheadsystem},
we propose an alternative approach based on successive condensation and reduction
of the KKT system, following the method introduced in \cite{pacaud2022condensed}.
If the structure is well-defined, we show that we can condense the
KKT system~\eqref{eq:kkt:augmented} to a dense matrix
with size $n_u \times n_u$ in two steps: first, by removing the inequality constraints
in \eqref{eq:kkt:augmented}, then by exploiting the structure of
the equality constraints to reduce the condensed system to a dense matrix.
The condensation and reduction steps are illustrated
in Figure~\ref{fig:problem:kktreduction}.

\begin{figure}[!ht]
  \centering
  \includegraphics[width=15cm]{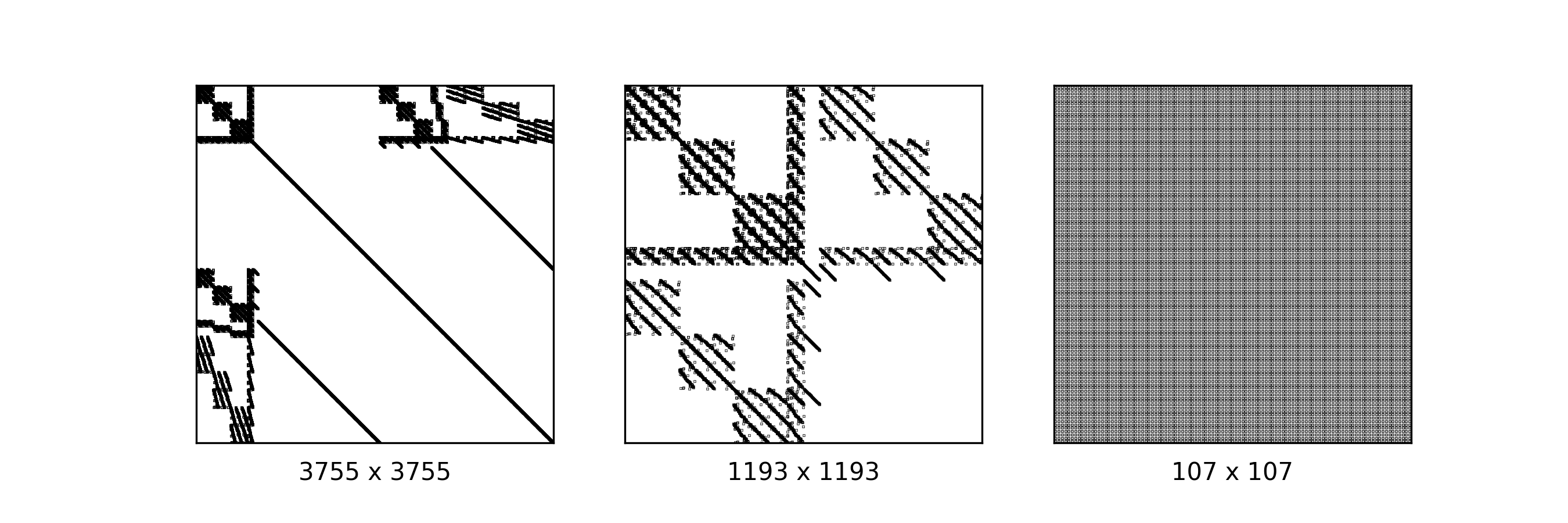}
  \caption{Successive reductions for a block-structured nonlinear problem with $N=3$: Augmented system \eqref{eq:kkt:augmented}, Condensed system \eqref{eq:kkt:condensed}, Reduced system \eqref{eq:kkt:reduced}.
  \label{fig:problem:kktreduction}}
\end{figure}

\subsubsection{Condensation step}
\label{sec:problem:kkt:condensation}
The condensation step allows reducing the size of the KKT system drastically
if the number of inequality constraints is large\footnote{It is equivalent to the normal
equations in linear programming~\cite[Chapter 16, p.412]{nocedal2006numerical}}.

\begin{proposition}[Condensed KKT system]
  \label{prop:condensed}
  The linear system~\eqref{eq:kkt:augmented} is equivalent to
  \begin{equation}
    \label{eq:kkt:condensed}
    \begin{bmatrix}
      K + \Sigma_{p} & G^\top  \\
      G & 0
    \end{bmatrix}
    \begin{bmatrix}
      p_{d} \\
      p_{y}
    \end{bmatrix}
    =
    -
    \begin{bmatrix}
      r_{1} + H^\top(\Sigma_s r_4 - r_2) \\
      r_{3}
    \end{bmatrix}
    \; ,
  \end{equation}
  where $K \in \mathbb{R}^{(Nn_x + n_u) \times (Nn_x + n_u)}$ is the \emph{condensed matrix}
  $K := W + H^\top \Sigma_s H$.
  The descent directions $p_s$ and $p_z$ are recovered as
  \begin{equation}
    \left\{
    \begin{aligned}
      & p_z = \Sigma_s \big[H p_{d} + r_4 \big] - r_2 \; , \\
      & p_s = -\Sigma_s^{-1} \big[r_2 + p_z \big] \; .
    \end{aligned}
    \right.
  \end{equation}
\end{proposition}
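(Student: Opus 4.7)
The plan is to derive \eqref{eq:kkt:condensed} by block Gaussian elimination of the two ``inequality-related'' unknowns $p_s$ and $p_z$ from the augmented system \eqref{eq:kkt:augmented}, showing that the remaining equations for $(p_d,p_y)$ take exactly the claimed condensed form, and that the recovery formulas for $p_s,p_z$ follow from back-substitution. Since the interior-point Hessian diagonal $\Sigma_s$ is strictly positive throughout the IPM iterations, it is invertible, so the elimination is well-defined; this is the only nontrivial prerequisite, and it is standard for IPM so I do not expect a real obstacle here.

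First I would read off the fourth block row of \eqref{eq:kkt:augmented}, namely $H p_d + p_s = -r_4$, to express $p_s = -r_4 - H p_d$. Substituting this into the second block row $\Sigma_s p_s + p_z = -r_2$ yields $p_z = -r_2 - \Sigma_s p_s = \Sigma_s(H p_d + r_4) - r_2$, which is exactly the first recovery identity in the proposition. Rearranging this relation as $r_2 + p_z = \Sigma_s(H p_d + r_4) = -\Sigma_s p_s$ immediately gives the second recovery identity $p_s = -\Sigma_s^{-1}(r_2 + p_z)$.

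Next I would substitute the expression for $p_z$ into the first block row of \eqref{eq:kkt:augmented}:
\begin{equation*}
(W+\Sigma_p)p_d + G^\top p_y + H^\top p_z = -r_1 ,
\end{equation*}
which becomes
\begin{equation*}
(W+\Sigma_p)p_d + G^\top p_y + H^\top\bigl(\Sigma_s H p_d + \Sigma_s r_4 - r_2\bigr) = -r_1 .
\end{equation*}
Collecting the terms multiplying $p_d$ and recalling the definition $K := W + H^\top \Sigma_s H$, this rearranges to
\begin{equation*}
(K+\Sigma_p)p_d + G^\top p_y = -r_1 - H^\top(\Sigma_s r_4 - r_2),
\end{equation*}
which is the first block row of \eqref{eq:kkt:condensed}. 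The third block row $G p_d = -r_3$ of \eqref{eq:kkt:augmented} is untouched by the elimination and yields the second block row of \eqref{eq:kkt:condensed} directly.

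Finally, I would record the equivalence in both directions: any solution $(p_d,p_s,p_y,p_z)$ of \eqref{eq:kkt:augmented} satisfies \eqref{eq:kkt:condensed} together with the recovery formulas by the computation above; conversely, given $(p_d,p_y)$ solving \eqref{eq:kkt:condensed}, the recovery formulas define $(p_s,p_z)$, and a direct substitution shows that the four block rows of \eqref{eq:kkt:augmented} are all satisfied. This direct verification is purely algebraic and the whole proof reduces to bookkeeping; the only conceptual point worth flagging is that the condensation pays off because $\Sigma_s$ (and $\Sigma_p$) are diagonal, so forming $H^\top \Sigma_s H$ and applying $\Sigma_s^{-1}$ is cheap and preserves the block-angular sparsity needed for the subsequent reduction step.
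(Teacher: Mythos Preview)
Your derivation is correct and is exactly the standard block Gaussian elimination argument one would expect. The paper does not actually give an in-text proof of this proposition but simply refers to \cite[Theorem~2.2]{pacaud2022condensed}, so there is no alternative approach to compare against; your write-up would serve perfectly well as a self-contained proof here.
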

\begin{proof}
  See \cite[Theorem 2.2]{pacaud2022condensed}.
\end{proof}
The condensed matrix $K$ inherits the block-angular structure
of the Hessian of the Lagrangian $W$.

\begin{proposition}
  The condensed matrix $K = W + H^\top \Sigma_s H$
  has a block-angular structure, given as
  \begin{equation}
    \label{eq:k_arrowhead}
    K =
    \begin{bmatrix}
      K_{x_1 x_1} &  && K_{x_1u}\\
      & \ddots & & \vdots \\
        &  & K_{x_N x_N} & K_{x_N u} \\
      K_{u x_1} & \hdots & K_{u x_N} & K_{uu}
    \end{bmatrix}
  \end{equation}
  where we have defined the condensed blocks
  $K_{x_i x_i} := W_{x_i x_i} + (H_{x_i}^i)^\top \Sigma_{s_i} H^i_{x_i}$,
  $K_{u x_i} := W_{u x_i} + (H^i_{u})^\top \Sigma_{s_i} H^i_{x_i}$ and
  $K_{uu} := W_{uu} + \sum_{i=1}^N (H^i_{u})^\top \Sigma_{s_i} H^i_{u}$.
\end{proposition}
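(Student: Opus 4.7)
The plan is to derive the structure of $K$ by direct block multiplication, exploiting the fact that the inequality Jacobian $H$ inherits the same sparsity pattern as the equality Jacobian $G$, since each $h_i$ depends only on $(x_i, u)$ and not on $x_j$ for $j \neq i$.

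First I would write out $H$ explicitly in block form. From the definition of the extended constraint $h(x,u) = (h_1(x_1,u), \ldots, h_N(x_N,u))$, denoting $H^i_{x_i} := \partial_{x_i} h_i$ and $H^i_u := \partial_u h_i$, one obtains the block-angular layout
\begin{equation*}
  H = \begin{bmatrix}
    H^1_{x_1} & & & H^1_u \\
    & \ddots & & \vdots \\
    & & H^N_{x_N} & H^N_u
  \end{bmatrix}, \qquad
  \Sigma_s = \mathrm{diag}(\Sigma_{s_1}, \ldots, \Sigma_{s_N}),
\end{equation*}
with $\Sigma_s$ block diagonal since the slacks $s_i$ are scenario-local.

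Next I would compute $H^\top \Sigma_s H$ by partitioning $H$ row-wise into its $N$ scenario-blocks $H_i = [\,0\;\cdots\;H^i_{x_i}\;\cdots\;0\;\;H^i_u\,]$, so that $H^\top \Sigma_s H = \sum_{i=1}^N H_i^\top \Sigma_{s_i} H_i$. Each term $H_i^\top \Sigma_{s_i} H_i$ is the outer product of a sparse block-row with itself; its only non-zero blocks lie in positions $(x_i,x_i)$, $(x_i,u)$, $(u,x_i)$, $(u,u)$, giving respectively $(H^i_{x_i})^\top \Sigma_{s_i} H^i_{x_i}$, $(H^i_{x_i})^\top \Sigma_{s_i} H^i_u$, $(H^i_u)^\top \Sigma_{s_i} H^i_{x_i}$, $(H^i_u)^\top \Sigma_{s_i} H^i_u$. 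The crucial point is that cross-scenario blocks $(x_i, x_j)$ for $i \neq j$ are identically zero, because $H_i$ has no entry in column-block $x_j$ — this is what preserves the arrowhead sparsity. Summing over $i$ accumulates the $(u,u)$ block, leaves the diagonal $(x_i,x_i)$ blocks untouched across scenarios, and leaves the $(x_i, u)$ and $(u, x_i)$ blocks scenario-local.

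Finally I would add $W$ term-by-term. Since $W$ already has the arrowhead shape displayed in the excerpt, adding the contributions of $H^\top \Sigma_s H$ block-by-block yields exactly
\begin{equation*}
  K_{x_i x_i} = W_{x_i x_i} + (H^i_{x_i})^\top \Sigma_{s_i} H^i_{x_i}, \quad
  K_{u x_i} = W_{u x_i} + (H^i_u)^\top \Sigma_{s_i} H^i_{x_i}, \quad
  K_{uu} = W_{uu} + \sum_{i=1}^N (H^i_u)^\top \Sigma_{s_i} H^i_u,
\end{equation*}
with $K_{x_i u} = K_{u x_i}^\top$ by symmetry, and the claim follows. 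There is no real obstacle here: the argument is a bookkeeping exercise in block matrix multiplication, and the only thing one must be careful about is tracking that the partial separability of the $h_i$'s is precisely what prevents cross-scenario fill-in in the $(x_i, x_j)$ blocks.
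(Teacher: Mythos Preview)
Your proposal is correct and amounts to the same routine verification the paper has in mind; the paper's own proof is the single line ``This is proved by induction,'' whereas you spell out the direct block-multiplication argument, which is arguably the more natural presentation here since there is no real inductive step beyond the bookkeeping you describe.
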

\begin{proof}
  This is proved by induction.
\end{proof}

\subsubsection{Reduction step}
\label{sec:problem:kkt:reduction}
In addition, we can exploit the structure of the equality constraints
$g_1, \cdots, g_N$ to further reduce the size of the linear system
\eqref{eq:kkt:condensed} down to a dense matrix with size $n_u \times n_u$.
Equation~\eqref{eq:k_arrowhead} exhibits the structure w.r.t.
the state $x$ and the control $u$, we rewrite as such the
condensed KKT system~\eqref{eq:kkt:condensed} as
\begin{equation*}
  \begin{bmatrix}
  K_{x_1 x_1} &  && K_{x_1u} & (G^1_{x_1})^\top & & \\
              & \ddots & & \vdots & & \ddots & \\
              &  & K_{x_N x_N} & K_{x_N u} & & & (G^N_{x_N})^\top \\
  K_{u x_1} & \hdots & K_{u x_N} & K_{uu} & (G^1_{u})^\top & \hdots & (G^1_{u})^\top \\
  G^1_{x_1} & &  & G^1_{u} & & & \\
  & \ddots &  &\vdots  & & & \\
  & & G^N_{x_N} & G^N_{u} & & & \\
  \end{bmatrix}
  \begin{bmatrix}
    p_{x_1} \\
    \vdots \\
    p_{x_N} \\
    p_u \\
    p_y^1 \\
    \vdots \\
    p_y^N
  \end{bmatrix}
  =
  -
  \begin{bmatrix}
    \hat{r}_1^1 \\
    \vdots \\
    \hat{r}_1^N \\
  \hat{r}_2 \\
    \hat{r}_3^1 \\
    \vdots \\
    \hat{r}_3^N
  \end{bmatrix}
  \; ,
\end{equation*}
where we have renamed the right-hand-side in~\eqref{eq:kkt:condensed}
as $\hat{r}$.

\begin{proposition}[Reduction]
  \label{prop:reduction}
  Assume that for all $i=1,\cdots, N$ the Jacobian
  matrices $G_x^i \in \mathbb{R}^{n_x \times n_x}$ are invertible.
  Then the linear system~\eqref{eq:kkt:condensed}
  is equivalent to
  \begin{equation}
    \label{eq:kkt:reduced}
    \hat{K}_{uu} \, p_u = - \hat{r}_2
    + \sum_{i=1}^N \Big[ (G_u^i)^\top (G_x^i)^{-\top} \hat{r}_1^i
    + \big[K_{ux_i} - (G_u^i)^\top (G_x^i)^{-\top} K_{x_ix_i}  \big] (G_x^i)^{-1}\hat{r}_3^i \Big]
  \end{equation}
  with $\hat{K}_{uu} := Z^\top K Z$
  and $Z \in \mathbb{R}^{(n_u + N n_x) \times n_u}$ is the reduction operator defined as
  \begin{equation}
    Z = \begin{bmatrix}
      -(G_x^1)^{-1} G_u^1 \\
      \vdots \\
      -(G_x^N)^{-1} G_u^N \\
      I
    \end{bmatrix}
    \; .
  \end{equation}
  The descent directions $p_x$ and $p_y$
  are recovered as
  \begin{equation}
    \left\{
    \begin{aligned}
      & p_x^i = -(G_x^i)^{-1} \big[\hat{r}_3^i + G_u^i p_u \big] \\
      & p_y^i = -(G_x^i)^{-\top} \big[\hat{r}_1^i + K_{x_i x_i}p_x^i + K_{x_i u}p_u \big] \; .
    \end{aligned}
    \right.
  \end{equation}
\end{proposition}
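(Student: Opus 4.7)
The plan is to use a classical null-space (reduced-space) reformulation of the condensed KKT system, exploiting the fact that by assumption the state Jacobians $G_x^i$ are invertible so that the columns of $Z$ in the statement span the null-space of $G$. Specifically, I will verify $GZ = 0$ first: because $G$ is block-diagonal in $x$ with an added $G_u$ column, a block-by-block calculation gives $G_x^i\cdot(-(G_x^i)^{-1}G_u^i) + G_u^i\cdot I = 0$, so $Z$ parametrizes the null-space of $G$ exactly once one has fixed $p_u$.

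Next I would split the primal direction as $p_d = Z\,p_u + Y$, where $Y$ is a particular solution of the equality constraints (the second block row of \eqref{eq:kkt:condensed}). The second block row reads $G_x^i p_x^i + G_u^i p_u = -\hat r_3^i$, which, upon solving, produces the stated formula $p_x^i = -(G_x^i)^{-1}(\hat r_3^i + G_u^i p_u)$ and gives $Y=(-(G_x^1)^{-1}\hat r_3^1,\ldots,-(G_x^N)^{-1}\hat r_3^N,0)$. Applying $Z^\top$ to the first block row $K p_d + G^\top p_y = -\hat r_1$ annihilates the adjoint term since $Z^\top G^\top = (GZ)^\top = 0$, yielding
\begin{equation*}
(Z^\top K Z)\,p_u = -Z^\top \hat r_1 - Z^\top K Y.
\end{equation*}
By definition, $\hat K_{uu} = Z^\top K Z$; what remains is to identify the right-hand side with the expression in \eqref{eq:kkt:reduced}.

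The main grind, and the only step that requires care, is expanding $-Z^\top \hat r_1 - Z^\top K Y$ using the block-angular structure \eqref{eq:k_arrowhead} of $K$ and the structure of $Z$. Reading off the rows of $Z^\top$ as $[-(G_u^i)^\top (G_x^i)^{-\top}]$ in the $x_i$ slots and $I$ in the $u$ slot, $-Z^\top \hat r_1$ produces exactly $-\hat r_2 + \sum_i (G_u^i)^\top (G_x^i)^{-\top}\hat r_1^i$. For $-Z^\top K Y$, the $x_i$-blocks of $KY$ are $-K_{x_ix_i}(G_x^i)^{-1}\hat r_3^i$ and the $u$-block is $-\sum_i K_{ux_i}(G_x^i)^{-1}\hat r_3^i$; assembling these with the rows of $Z^\top$ yields precisely the bracketed term $[K_{ux_i} - (G_u^i)^\top (G_x^i)^{-\top} K_{x_ix_i}](G_x^i)^{-1}\hat r_3^i$ in the statement. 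This matches \eqref{eq:kkt:reduced} term by term.

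Finally, to recover $p_y$, I would return to the $x_i$-block of the first block row: $K_{x_ix_i}p_x^i + K_{x_iu}p_u + (G_x^i)^\top p_y^i = -\hat r_1^i$. Invertibility of $G_x^i$ lets me solve directly to obtain the stated formula $p_y^i = -(G_x^i)^{-\top}(\hat r_1^i + K_{x_ix_i}p_x^i + K_{x_iu}p_u)$. The hardest part is really just the bookkeeping in the $Z^\top K Y$ expansion; conceptually the proof is a standard null-space reduction, and no nontrivial analytical obstacle is expected once $GZ=0$ is established.
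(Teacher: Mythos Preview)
Your argument is correct: the null-space reduction via $GZ=0$, the particular solution $Y$ from the equality block, the elimination of $p_y$ by premultiplying with $Z^\top$, and the block-by-block expansion of $-Z^\top\hat r_{\text{primal}} - Z^\top K Y$ all check out and reproduce \eqref{eq:kkt:reduced} and the recovery formulas exactly. The paper itself does not give a self-contained proof here but defers to \cite[Theorem~2.1]{pacaud2022condensed}; your null-space derivation is the standard route and is essentially what that reference does, so there is no meaningful methodological difference to discuss.

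Two minor cosmetic points: you silently use $\hat r_1$ to denote the full primal right-hand side $(\hat r_1^1,\ldots,\hat r_1^N,\hat r_2)$, which conflicts with the paper's notation where $\hat r_2$ is the $u$-component; and strictly speaking the first block row carries $K+\Sigma_p$ rather than $K$ (the paper's expanded display just before the proposition absorbs or drops $\Sigma_p$ as well, so this is the paper's sloppiness, not yours). Neither affects the validity of the argument.
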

\begin{proof}
  See \cite[Theorem 2.1]{pacaud2022condensed}.
\end{proof}

The reduction \eqref{eq:kkt:reduced} is equivalent to a Schur complement
approach applied to the condensed KKT system~\eqref{eq:kkt:condensed}.
In Proposition~\eqref{prop:condensed}, we have shown that the condensed
matrix $K$ has a block-angular structure. The associated
condensed KKT system~\eqref{eq:kkt:condensed} is also inheriting a block-angular
structure in the form of~\eqref{eq:kkt:arrowheadsystem},  where
the blocks are given by
\begin{equation}
  \label{eq:kkt:blockangularcondensed}
  A_0 = K_{uu}
  \;, \quad
  A_i = \begin{bmatrix}
    K_{x_i x_i} & (G_{x}^i)^\top \\
    G_{x}^i & 0
\end{bmatrix}
  \;, \quad
  B_i = \begin{bmatrix}
    K_{x_i u} \\
    G_u^i
  \end{bmatrix}^\top
  \; .
\end{equation}
\begin{proposition}
  Assume that for each $i=1, \cdots, N$ the Jacobian $G_x^i$ is invertible.
  Let $S_{uu} = A_0 - \sum_{i=1}^N B_i A_i^{-1} B_i^\top$ be the
  Schur complement associated to the block-angular system
  \eqref{eq:kkt:arrowheadsystem} with the matrices $(A_i, B_i)$
  defined in \eqref{eq:kkt:blockangularcondensed}. Then,
  the Schur complement $S_{uu}$ is equal to the reduced
  matrix $\widehat{K}_{uu}$ defined in \eqref{eq:kkt:reduced}: $S_{uu} = Z^\top K Z$.
\end{proposition}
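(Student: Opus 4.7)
The plan is to prove the identity by computing both sides explicitly and matching term by term, exploiting (i) the explicit inverse of the $2\times 2$ block matrix $A_i$ available because its $(2,2)$ block is zero and $G_x^i$ is assumed invertible, and (ii) the block-angular structure \eqref{eq:k_arrowhead} of $K$.

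First, I would invert each $A_i$ in closed form. Since $A_i=\begin{bmatrix}K_{x_ix_i} & (G_x^i)^\top\\ G_x^i & 0\end{bmatrix}$ with $G_x^i$ invertible, a direct verification gives
\begin{equation*}
A_i^{-1} = \begin{bmatrix} 0 & (G_x^i)^{-1}\\ (G_x^i)^{-\top} & -(G_x^i)^{-\top} K_{x_ix_i}(G_x^i)^{-1}\end{bmatrix}.
\end{equation*}
With $B_i^\top = \begin{bmatrix} K_{x_iu}\\ G_u^i\end{bmatrix}$ and $B_i = \begin{bmatrix} K_{ux_i} & (G_u^i)^\top\end{bmatrix}$, a block multiplication yields
\begin{equation*}
B_i A_i^{-1} B_i^\top = K_{ux_i}(G_x^i)^{-1}G_u^i + (G_u^i)^\top (G_x^i)^{-\top} K_{x_iu} - (G_u^i)^\top (G_x^i)^{-\top} K_{x_ix_i}(G_x^i)^{-1}G_u^i.
\end{equation*}
Subtracting the sum of these from $A_0 = K_{uu}$ gives an explicit formula for $S_{uu}$.

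Next, I would compute $Z^\top K Z$ directly. Writing $z_i := -(G_x^i)^{-1}G_u^i$, the block-angular form \eqref{eq:k_arrowhead} of $K$ gives
\begin{equation*}
Z^\top K Z = K_{uu} + \sum_{i=1}^N \bigl[\, z_i^\top K_{x_ix_i} z_i + z_i^\top K_{x_iu} + K_{ux_i} z_i\,\bigr].
\end{equation*}
Substituting $z_i = -(G_x^i)^{-1}G_u^i$ produces exactly the three terms obtained above (with the signs flipped by the overall minus in $S_{uu}$), using $K_{ux_i} = K_{x_iu}^\top$ which follows from the symmetry of $K = W + H^\top\Sigma_s H$.

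The computation is essentially bookkeeping; the only subtlety is tracking the transposes and signs so that the cross-term $K_{ux_i}(G_x^i)^{-1}G_u^i$ and its transpose $(G_u^i)^\top(G_x^i)^{-\top}K_{x_iu}$ combine correctly. There is no real analytic obstacle, since Proposition~\ref{prop:reduction} has already justified invertibility of the required blocks; the result is an identity of symmetric $n_u\times n_u$ matrices that holds entrywise once both sides are expanded. A one-line alternative would be to observe that $Z$ spans the null space of $G$, so $Z^\top K Z$ is by construction the Schur complement of the $(2,2)$ block of the condensed KKT matrix \eqref{eq:kkt:condensed} with respect to the reordering \eqref{eq:kkt:blockangularcondensed}, which is precisely $S_{uu}$.
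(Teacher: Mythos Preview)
Your proof is correct and follows essentially the same route as the paper: both compute the explicit inverse of $A_i$ (using that its $(2,2)$ block is zero and $G_x^i$ is invertible), expand $B_i A_i^{-1} B_i^\top$ by block multiplication, and identify the resulting expression for $S_{uu}$ with $Z^\top K Z$. The only difference is that you also expand $Z^\top K Z$ explicitly via the block-angular structure of $K$ before matching, whereas the paper asserts the final identity directly after expanding $S_{uu}$; this is a minor cosmetic addition, not a different approach.
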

\begin{proof}
  First, note that if the Jacobian $G_x^i$ is invertible,
  then the block matrix $A_i$ defined in \eqref{eq:kkt:blockangularcondensed}
  is also invertible, with
  \begin{equation}
    \label{eq:proof:invertiblematrix}
    A_i^{-1} =
    \begin{bmatrix}
      0 & (G_x^i)^{-1} \\
      (G_x^i)^{-\top} & - (G_x^i)^{-\top} K_{x_i x_i} (G_x^i)^{-1}
    \end{bmatrix}
    \;.
  \end{equation}
  Using \eqref{eq:kkt:blockangularcondensed}-\eqref{eq:proof:invertiblematrix}, we expand the
  expression of the terms in the sum constituting the Schur complement $S_{uu}$:
\begin{equation*}
  \begin{aligned}
    B_i A_i^{-1} B_i^\top &=
  \begin{bmatrix}
    K_{ux_i} & (G_u^i)^\top
  \end{bmatrix}
  \begin{bmatrix}
      0 & (G_x^i)^{-1} \\
      (G_x^i)^{-\top} & - (G_x^i)^{-\top} K_{x_i x_i} (G_x^i)^{-1}
  \end{bmatrix}
  \begin{bmatrix}
    K_{x_iu} \\ G_u^i
  \end{bmatrix}\;, \\
                          &=
    (G_u^i)^\top (G_x^i)^{-\top} K_{x_iu} + K_{ux_i} (G_x^i)^{-1} (G_u^i) - (G_u^i)^\top (G_x^i)^{-\top} K_{x_i x_i} (G_x^i)^{-1} G_u^i \;.
  \end{aligned}
\end{equation*}
Hence, the Schur complement $S_{uu} = A_0 - \sum_{i=1}^N B_i A_i^{-1} B_i^\top$
expands as
\begin{equation*}
  \begin{aligned}
    S_{uu} &= K_{uu} - \sum_{i=1}^N \big[ (G_u^i)^\top (G_x^i)^{-\top} K_{x_iu} + K_{ux_i} (G_x^i)^{-1} (G_u^i) - (G_u^i)^\top (G_x^i)^{-\top} K_{x_i x_i} (G_x^i)^{-1} G_u^i \big] \\
      &= Z^\top K Z \;.
  \end{aligned}
\end{equation*}
We recover the expression of the reduced matrix $\widehat{K}_{uu}$ in Proposition~\ref{prop:reduction}.
\end{proof}

\subsection{Discussion}
Hence, we can interpret the reduction step as a Schur complement
approach. Forming the Schur complement has always been the bottleneck
when solving distributed block angular problems in parallel~\cite{lubin2011scalable,chiang2014structured}. Its reduction operation
involves large memory transfers between the processes, with the number of transfers being on the order of $\mathcal{O}(log(p))$, where $p$ is the number of processes.
Due to the quasi-shared memory architecture on GPUs, the reduction can be implemented efficiently \cite{pacaud2022condensed}.
In the next section, we propose to extend \cite{pacaud2022condensed}
to assemble the reduced matrix $\widehat{K}_{uu}$ using two levels of
parallelism, using both MPI and CUDA, thus reducing the reliance on distributed memory.

\section{Parallel implementation}
\label{sec:algo}

In the previous section, we have detailed the structure of
block-angular nonlinear programs and presented the condensation and
reduction steps for the KKT system. The loose coupling between the
blocks is favorable
for parallelizing the evaluation of the derivatives
and the solution of the block-angular KKT system.
\emph{Globally}, we can distribute the computation
on different processes using MPI (coarse parallelism).
\emph{Locally}, we can further streamline
the computation using GPU accelerators (fine-grained parallelism).
This paradigm, with its two levels of parallelism, is directly in line with what is currently offered
by the new exascale architectures, where each node has 4 to 8 GPUs,
all sharing a unified memory for fast communication.
We present in \S\ref{sec:algo:ad} how we streamline the evaluation of the model
using automatic differentiation, and in \S\ref{sec:algo:kkt} how we parallelize
the solution of the KKT system.

\subsection{Parallel automatic-differentiation}
\label{sec:algo:ad}
First, we present how to evaluate the model in parallel using automatic differentiation \cite{griewank2008evaluating}.
We illustrate the procedure in Figure~\ref{fig:algo:parallel_ad}.
The goal of the algorithm is to streamline the evaluation of the $N$ scenarios on
$N/M$ GPUs, $M$ being the number of scenarios evaluated locally on each GPU
(we suppose here that $N$ is a multiple of $M$).

\begin{figure}[!ht]
  \centering
  \begin{tikzpicture}
    \node[inner sep=2pt] (root) at (0,1) {{\tt root}};

    \node[main node] (gpu1) at (-3,-0.3) {};
    \node[main node] (gpu2) at (-1,-0.3) {};
    \node[main node] (gpu3) at (1,-0.3) {};
    \node[main node] (gpu4) at (3,-0.3) {};

    \node (gpu5) at (-3, -1) {\includegraphics[width=1cm]{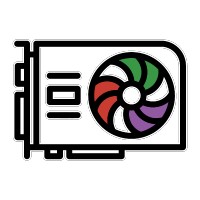}};
    \node (gpu6) at (-1, -1) {\includegraphics[width=1cm]{figures/gpu.png}};
    \node (gpu7) at (1, -1) {\includegraphics[width=1cm]{figures/gpu.png}};
    \node (gpu8) at (3, -1) {\includegraphics[width=1cm]{figures/gpu.png}};

    \draw (root) -- node [midway, above, sloped] {\tiny $g_1,\cdots,g_4$}(gpu1) ;
    \draw (root) -- (gpu2) ;
    \draw (root) -- (gpu3) ;
    \draw (root) -- node [midway, above, sloped] {\tiny $g_{13},\cdots,g_{16}$} (gpu4) ;

    \node (n1) at (-3.6,-1.5) {};
    \node (n2) at (-3.6,-3.0) {\tiny $g_1$};
    \draw[snake=coil,segment length=5pt] (n1) -- (n2) ;

    \node (n3) at (-3.2,-1.5) {};
    \node (n4) at (-3.2,-3.0) {};
    \draw[snake=coil,segment length=5pt] (n3) -- (n4) ;

    \node (n5) at (-2.8,-1.5) {};
    \node (n6) at (-2.8,-3.0) {};
    \draw[snake=coil,segment length=5pt] (n5) -- (n6) ;

    \node (n7) at (-2.4,-1.5) {};
    \node (n8) at (-2.4,-3.0) {\tiny $g_4$};
    \draw[snake=coil,segment length=5pt] (n7) -- (n8) ;

    \node (n9) at (-3.,-3.0) {\tiny $\cdots$};

    \node (p1) at (2.4,-1.5) {};
    \node (p2) at (2.4,-3.0) {\tiny $g_{13}$};
    \draw[snake=coil,segment length=5pt] (p1) -- (p2) ;

    \node (p3) at (2.8,-1.5) {};
    \node (p4) at (2.8,-3.0) {};
    \draw[snake=coil,segment length=5pt] (p3) -- (p4) ;

    \node (p5) at (3.2,-1.5) {};
    \node (p6) at (3.2,-3.0) {};
    \draw[snake=coil,segment length=5pt] (p5) -- (p6) ;

    \node (p7) at (3.6,-1.5) {};
    \node (p8) at (3.6,-3.0) {\tiny $g_{16}$};
    \draw[snake=coil,segment length=5pt] (p7) -- (p8) ;

    \node (p9) at (3.2,-3.0) {\tiny $\cdots$};

    \node (pt1) at (-1,-2.0) {$\cdots$};
    \node (pt2) at (1,-2.0) {$\cdots$};
  \end{tikzpicture}
  \caption{Parallel evaluation of the derivatives for $g_1, \cdots g_N$ on 4 GPUs:
  we have a total of $N=16$ scenarios, each GPU evaluating $M = 16/4 = 4$
  scenarios locally. \label{fig:algo:parallel_ad}}
\end{figure}


\subsubsection{Local parallelism}
\label{sec:algo:ad:local}
The first level of parallelism streamlines
the evaluation of the model on SIMD/GPU devices.
We have designed our implementation to run entirely on the GPU device,
to avoid any data transfer between the host and the device.

\paragraph{Block evaluation}
We suppose that the nonlinear functions $(f_i, g_i, h_i)$
share the same structure, its expressions yielding the same Abstract Syntax Tree (AST)
for all $i=1,\cdots, M$.
We illustrate the block evaluation on a simple abstract tree, but the reasoning
extends to more complicated structures. We suppose that for all $i$, the
functions $f_i, g_i, h_i$ depend linearly on a nonlinear basis matrix
$\psi: \mathbb{R}^{n_x} \times \mathbb{R}^{n_u} \to \mathbb{R}^{n_b}$:
that is, there exists three \emph{sparse} matrices $L_f, L_g, L_h$ such that
\begin{equation}
  \label{eq:AST}
  f_i(x_i, u) = L_f \psi(x_i, u) \; , \quad
  g_i(x_i, u) = L_g \psi(x_i, u) \; , \quad
  h_i(x_i, u) = L_h \psi(x_i, u) \; .
\end{equation}
Suppose we aim to evaluate the $M$ functions $g_1, \cdots, g_M$ in batch
for the states $x_1, \cdots, x_M$.
The structure~\eqref{eq:AST} is directly amenable for SIMD evaluation.
We denote by $X_M = (x_1, \cdots, x_M) \in \mathbb{R}^{n_x \times M}$ the dense
matrix obtained by concatenating the $M$ states together. By using a proper
GPU kernel or a parallel modeler, we can evaluate the basis in a SIMD fashion
and build the matrix $\Psi(X_M, u) := \big(\psi(x_1, u), \cdots, \psi(x_M, u)\big) \in \mathbb{R}^{n_b \times M}$.
Then, evaluating the functions $g_1, \cdots, g_M$ simultaneously translates to
the evaluation of one {\tt SpMM} product:
\begin{equation}
  \label{eq:eval:block}
   \big(g_1(x_1, u), \cdots, g_M(x_M, u)\big) = L_g \Psi(X_M, u)
   \in \mathbb{R}^{n_x \times M} \; .
\end{equation}
The total memory required in the two successive operations is
$\mathcal{O}((n_x + n_b) \times M)$, and depends linearly on the number of
blocks $M$.
We note the {\tt SpMM} operations are generally implemented efficiently in the
vendor library ({\tt cusparse} for CUDA, {\tt rocSPARSE} for AMDGPU).

\paragraph{First-order derivatives}
Suppose that for a given $i$ we have a differentiable implementation $\tt{gb}_i: \mathbb{R}^{n_d} \to \mathbb{R}^{n_x}$ associated to the function $g_i$.
We aim to evaluate the Jacobian-matrix
products $(\nabla g_i) D$ for $p$ tangents encoded in a matrix $D \in \mathbb{R}^{n_d \times p}$
using forward-mode AD and operator overloading. This operation translates
to propagating \emph{forward} a vector of dual numbers.
Denoting by $\underline{d} \in \mathbb{D}_p^{n_d}$ the dual number
encoding the $p$ tangents stored in $D$, evaluating $(\nabla g_i) D$
simply amounts to call $\tt{gb}_i(\underline{d})$ and extract
the results in the dual numbers returned as a result.
As $G^i$ is sparse, we can apply the technique of Jacobian coloring \cite{griewank2008evaluating} to
compress the independent columns of the sparse matrix $G^i$ and reduces
the number of required seeding tangents $p$ needed to evaluate the full
Jacobian.

Suppose now we want to evaluate the sparse
Jacobians $G^1, \cdots, G^M$ in batch. As the functions $g_i$ are based on the
same AST, their respective Jacobians $G^1, \cdots, G^M$ are sharing the same sparsity pattern.
By seeding a matrix of dual numbers $\underline{D}_M
= (\underline{d}_1, \cdots, \underline{d}_M) \in \mathbb{D}_p^{n_d \times M}$,
we can use the same operation as \eqref{eq:eval:block} to streamline
the evaluation of the $M$ Jacobian-vector products using
the SIMD kernel $\Psi(\cdot)$ and {\tt SpMM} operations:
\begin{equation}
  \label{eq:eval:blockjac}
  \big(\text{\tt gb}_1(\underline{d}_1), \cdots, \text{\tt gb}_M(\underline{d}_M)\big) := L_g \Psi(\underline{D}_M)
   \in \mathbb{D}_p^{n_x \times M} \; .
\end{equation}
Once the results are evaluated, it remains
to uncompress the dual outputs to build the $M$ sparse Jacobians $G^1, \cdots, G^M$.
Hence, we can streamline the evaluation of the Jacobian along with
the number of tangents $p$ and the number of blocks $M$.
This comes at the expense of increasing memory usage to
$\mathcal{O}((n_x + n_b + n_d) \times M \times p)$ (to store the dual
matrices associated to the input, the intermediate basis $\Psi$ and the output).

\paragraph{Second-order derivatives}
The evaluation of the second-order derivatives follows
the same procedure, using \emph{forward-over-reverse} AD.
For each $i$, we suppose available an adjoint function
$\text{\tt adj\_gb}_i: \mathbb{R}^{n_d} \times \mathbb{R}^{n_x} \to \mathbb{R}^{n_d}$
which for any primal $x \in \mathbb{R}^{n_d}$ and adjoint $y \in \mathbb{R}^{n_x}$
evaluates the Jacobian-transpose vector product $(G^i(x))^\top y$ (reverse-mode).
Using forward-mode AD on top of $\text{\tt adj\_gb}_i$, we can compute the second-order
derivatives $y^\top\nabla^2 g_i(x) V$ for $p$ directions $V$ by calling
$\text{\tt adj\_gb}_i(\underline{x}, \underline{y})$.
Using Hessian coloring, we can compress the independent columns
of the sparse matrix $y^\top\nabla^2 g(x)$ and reduce the number
of seeding tangents $p$ required to evaluate the full Hessian.
We note that in general obtaining an adjoint $\text{\tt adj\_gb}_i$
running in parallel is nontrivial due to potential race conditions incurred by the control flow reversal of the original code.

Computing the Hessian $y^\top\nabla^2 g_i(x)$ in parallel
for $i=1, \cdots M$ amounts to defining two matrices
of dual numbers
$\underline{X}_M = (\underline{X}_1, \cdots, \underline{X}_M) \in \mathbb{D}_p^{n_d \times M}$,
$\underline{Y}_M = (\underline{y}_1, \cdots, \underline{y}_M) \in \mathbb{D}_p^{n_x \times M}$ and evaluate
$\nabla \Psi(\underline{X}_M)^\top L_g^\top \underline{Y}_M$.
The dual outputs are uncompressed  to build the $M$ sparse Hessians
(as the sparsity pattern of the Hessians is different than those of
the Jacobians, the matrix $\underline{X}_M$ employed here is different than
the one used in \eqref{eq:eval:blockjac}).
The total memory required to store the duals
is $\mathcal{O}((2n_x+n_d + n_b) \times M \times p)$. For more details, we refer
to the vector forward mode as described in~\cite{griewank2008evaluating}.

\subsubsection{Global parallelism}
\label{sec:algo:ad:global}
Now, if we have several GPUs at our disposal, we can
push the parallelism further by distributing the evaluations
using multiprocessing and a Message Passing Interface (MPI) library.
Coming back at our original problem~\eqref{eq:originalproblem},
we illustrate in Figure~\ref{fig:algo:parallel_ad} how to dispatch the evaluation of the $N$ nonlinear constraints
$g_1, \cdots, g_N$ (the same reasoning applies to the objectives
$f_1, \cdots, f_N$ and the inequality constraints $h_1, \cdots, h_N$).
We use the streamlined implementation described in the previous
subsection to evaluate the constraints in a batch of size $M$:
the first GPU evaluates the constraints $g_1, \cdots, g_M$, the second GPU
evaluates $g_{M+1}, \cdots, g_{2M}$, and so on. In total, the evaluation
of the $N$ constraints requires $N/M$ GPUs (if $M=1$, each GPU evaluate
one constraint; if $M=N$, we use only one GPU evaluating all the constraints).

The implementation has been designed to minimize the communication between
the different processes: each batch $g_1, \cdots, g_M$
stores the data it needs \emph{locally}, the only data exchange with the other processes being
the vector of input and the vector of output.
In addition, we will see in the next section
we do not have to transfer the first- and second-order information if
a parallel linear solver is being used.

\subsection{Parallel KKT solver}
\label{sec:algo:kkt}
By exploiting the block-angular structure of the KKT system,
we can solve the Newton step in parallel using a Schur complement approach.
The challenge lies in the computation of the Schur complement
matrix $S = A_0 - \sum_{i=1}^N B_i A_i^{-1} B_i^\top$.
Each product $B_i A_i^{-1} B_i^\top$ requires the factorization of the matrix $A_i$ and the
solution of a linear system with multiple (sparse) right-hand-side
$A_i^{-1} B_i$.
State-of-the-art methods are evaluating the Schur complement
using an \emph{incomplete augmented factorization} applied on
the auxiliary matrix $\begin{bmatrix} A_i & B_i^\top \\ B_i & 0 \end{bmatrix}$,
as currently implemented in the Pardiso linear solver~\cite{petra2014augmented}.
Here, we use an alternative approach building
on the reduced KKT system~\S\ref{sec:problem:kkt:reduction}
(equivalent to the Schur complement approach).
As the reduction can be streamlined on GPU accelerators~\cite{pacaud2022condensed},
this approach can assemble the Schur complement in parallel using CUDA-aware MPI.
We illustrate the parallel computation of the Schur complement
in Figure~\ref{fig:algo:parallel_schur}.

\begin{figure}[!ht]
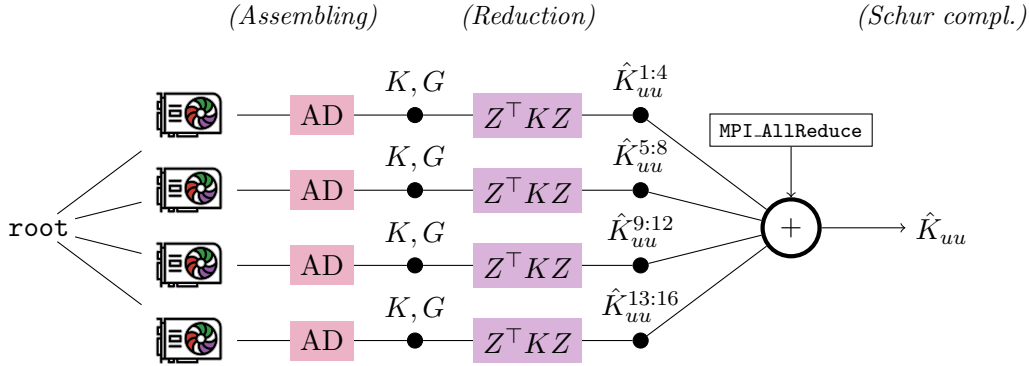

  \centering
  \begin{tikzpicture}
    \node[inner sep=2pt] (root) at (0,0) {{\tt root}};

    \node (gpu1) at (2, 1.5) {\includegraphics[width=1cm]{figures/gpu.png}};
    \node (gpu2) at (2, 0.5) {\includegraphics[width=1cm]{figures/gpu.png}};
    \node (gpu3) at (2, -0.5) {\includegraphics[width=1cm]{figures/gpu.png}};
    \node (gpu4) at (2, -1.5) {\includegraphics[width=1cm]{figures/gpu.png}};

    \draw (root) -- (gpu1) ;
    \draw (root) -- (gpu2) ;
    \draw (root) -- (gpu3) ;
    \draw (root) -- (gpu4) ;

    \node (red) at (3.5, 2.8) {\small\it (Assembling)};

    \node[main node, label=above:{$K,G$}] (ad1) at (5, 1.5) {};
    \node[main node, label=above:{$K,G$}] (ad2) at (5, 0.5) {};
    \node[main node, label=above:{$K,G$}] (ad3) at (5, -0.5) {};
    \node[main node, label=above:{$K,G$}] (ad4) at (5, -1.5) {};

    \draw (gpu1) -- node [midway, fill=purple!30, rectangle] {AD} (ad1) ;
    \draw (gpu2) -- node [midway, fill=purple!30, rectangle] {AD} (ad2) ;
    \draw (gpu3) -- node [midway, fill=purple!30, rectangle] {AD} (ad3) ;
    \draw (gpu4) -- node [midway, fill=purple!30, rectangle] {AD} (ad4) ;

    \node[main node, label=above:{$\hat{K}_{uu}^{1:4}$}] (red1) at (8, 1.5) {};
    \node[main node, label=above:{$\hat{K}_{uu}^{5:8}$}] (red2) at (8, 0.5) {};
    \node[main node, label=above:{$\hat{K}_{uu}^{9:12}$}] (red3) at (8, -0.5) {};
    \node[main node, label=above:{$\hat{K}_{uu}^{13:16}$}] (red4) at (8, -1.5) {};

    \node (red) at (6.5, 2.8) {\small\it (Reduction)};
    \draw (ad1) -- node [midway, fill=violet!30, rectangle] {$Z^\top K Z$} (red1) ;
    \draw (ad2) -- node [midway, fill=violet!30, rectangle] {$Z^\top K Z$} (red2) ;
    \draw (ad3) -- node [midway, fill=violet!30, rectangle] {$Z^\top K Z$} (red3) ;
    \draw (ad4) -- node [midway, fill=violet!30, rectangle] {$Z^\top K Z$} (red4) ;

    \node[draw, rectangle] (schurlab) at (10, 1.3) {\footnotesize\tt MPI\_AllReduce};
    \node[circle, draw, ultra thick] (schur) at (10, 0) {+};
    \draw[->] (schurlab) -- (schur) ;
    \draw (red1) -- (schur);
    \draw (red2) -- (schur);
    \draw (red3) -- (schur);
    \draw (red4) -- (schur);

    \node (finallab) at (12, 2.8) {\small\it (Schur compl.)};
    \node (final) at (12, 0) {$\hat{K}_{uu}$} ;
    \draw[->] (schur) -- (final) ;

  \end{tikzpicture}
  \caption{Parallel computation of the Schur complement.
  \label{fig:algo:parallel_schur}}
\end{figure}

\paragraph*{Assembling the sparse matrices.}
Using the procedure introduced in \S\ref{sec:algo:ad:local},
we evaluate \emph{locally} the Jacobians $G^1, \cdots, G^M$
the Jacobians $H^1, \cdots, H^M$ and the Hessians $W^1, \cdots, W^M$.
Using dedicated kernels,
we uncompress the results in the block-angular sparse Jacobians
\begin{equation*}
  G_x^{1:M} = \begin{bmatrix}
    G_{x_1}^1 & & \\
          & \ddots & \\
          & & G_{x_M}^M
  \end{bmatrix}
  , ~
  G_u^{1:M} = \begin{bmatrix}
    G_u^1 \\
    \vdots \\
    G_u^M
  \end{bmatrix}
  , ~
  H^{1:M} = \begin{bmatrix}
    H_{x_1}^1 & & & H_u^1 \\
              & \ddots & & \vdots \\
              & & H_{x_M}^M & H_u^M
  \end{bmatrix}
  \; ,
\end{equation*}
and sparse Hessian
\begin{equation*}
  W^{1:M} = \begin{bmatrix}
    W_{x_1x_1} & & & W_{x_1 u} \\
              & \ddots & & \vdots \\
              & & W_{x_Mx_M} & W_{x_M u} \\
    W_{ux_1} & \hdots & W_{ux_M} & W_{uu}
  \end{bmatrix}
  \; .
\end{equation*}
Once the sparse matrices are obtained, we recover the
condensed matrix $K^{1:M} = W^{1:M} + (H^{1:M})^\top \Sigma (H^{1:M})$ (Proposition \ref{prop:condensed})
using one {\tt SpGEMM} operation and we factorize the
matrix $G_x^{1:M}$ using a sparse LU factorization (potentially running in
batch as the matrices $G_{x_1}^1, \cdots, G_{x_M}^M$ are sharing
the same sparsity pattern).
Once the matrix $G_x^{1:M}$ is factorized as $PG_x^{1:M}Q = LU$ ($P$, $Q$ being
two permutation matrices), computing $(G_x^{1:M})^{-1}b$ translates to
two backsolves ({\tt SpSV}) and two matrix-vector multiplications ({\tt SpMV}),
as $(G_x^{1:M})^{-1}b = Q U^{-1}L^{-1} P b$.

\paragraph*{Local reduction.}
Once the sparse matrices are built, we evaluate locally the reduced
matrix $\widehat{K}_{uu}^{1:M}$ on the GPU, using $\text{div}(n_u, n_{batch}) + 1$
matrix-matrix product $\widehat{K}_{uu}^{1:M} V$ (with $V \in \mathbb{R}^{n_u \times n_{batch}}$ a dense matrix encoding $n_{batch}$ vectors of the Cartesian basis
of $\mathbb{R}^{n_u}$).
The evaluation of one batched matrix-matrix product $\widehat{K}_{uu}^{1:M} V
= (Z^\top K^{1:M} Z) V$ proceeds in three steps
\begin{enumerate}
  \item Solve $T_x = - (G_x^{1:M})^{-1} (G_u^{1:M} V)$.
  \item Evaluate $\begin{bmatrix}
      L_x \\ L_u
  \end{bmatrix} := \begin{bmatrix}
    K_{x x}^{1:M} & K_{xu}^{1:M} \\
    K_{ux}^{1:M} & K_{uu}^{1:M}
  \end{bmatrix}
  \begin{bmatrix}
    T_x \\ V
  \end{bmatrix}$.
\item Set $\widehat{K}_{uu}^{1:M}V = L_u - G_u^{1:M} (G_x^{1:M})^{-\top} L_x$.
\end{enumerate}
In total, we need 2 {\tt SpSM} and 3 {\tt SpMM} operations
in the first step, 1 {\tt SpMM} in the second step, and 2 {\tt SpSM}
and 3 {\tt SpMM} operations in the third step, giving a total
of 4 {\tt SpSM} and 7 {\tt SpMM} operations.
More than the computation, the reduction is limited by the memory,
as we have to store the three buffers $L_x, T_x, T_u$ with a total
size of $(2M \times n_x + n_u) \times n_{batch}$.
If $n_x$ is too large, it is in our interest to reduce $M$ (by using more GPUs)
or to reduce $n_{batch}$ (at the expense of computing more matrix-matrix
product $\widehat{K}_{uu}^{1:M} V$).

\paragraph*{Global reduction.}
Once we obtain the locally reduced matrices $\widehat{K}_{uu}^{nM+1:(n+1)M}$ for $n=0,\cdots, N/M-1$, we can
assemble the global reduced matrix $\widehat{K}_{uu}= \sum_{n=0}^{N/M-1} \widehat{K}_{uu}^{nM+1:(n+1)M}$
using one \emph{all reduce} ({\tt MPI\_Allreduce}) operation.
The size of the reduced matrix $\widehat{K}_{uu}$ is $n_u \times
n_u$, hence limiting the memory transfer required in the algorithm.

\subsection{Discussion}
We have presented a practical way to assemble the Schur complement
on multi-GPU architectures. The parallelism occurs both at the
\emph{local} level (SIMD evaluations on the GPUs) and at the \emph{global} level
(distributed computation with MPI).
The algorithm has the advantage of assembling the sparse Jacobians and Hessians
only locally, as the reduction occurs before proceeding to the memory transfer
with {\tt MPI\_Allreduce}. The reduced matrix has a dimension $n_u \times n_u$,
which compresses the memory transfer significantly if the number of degrees
of freedom $n_u$ is small. However, this comes at the expense of storing
a vector of dual numbers (whose memory is linearly proportional to the number
of blocks $M$ evaluated locally and the number of tangents $p$ being employed to evaluate
the sparse derivatives) and additional buffers in the reduction algorithm.
In the next section, we will test an implementation of the algorithm
on CUDA GPUs, and show that the algorithm is practical.

\section{Numerical results}
We demonstrate the capabilities of the algorithm
we introduced in Section \S\ref{sec:algo} on the
supercomputer Polaris, using CUDA-aware MPI to dispatch the solution
on multiple GPUs.
We present in \S\ref{sec:results:settings} the stochastic optimal
power flow problem, and give in \S\ref{sec:results:assessments}
detailed assessments of the algorithms we have introduced
earlier in \S\ref{sec:algo}. Eventually, we present in \S\ref{sec:results:blockopf}
a benchmark comparing our parallel solution algorithm with a state-of-the-art
solution method running on the CPU.

\subsection{Settings}
\label{sec:results:settings}

\subsubsection{Case study: the block-structured optimal power flow}
The stochastic optimal power flow problem aims at finding
an optimal dispatch for the generators $u$. The solution $u$
should minimize the operational costs while satisfying the
physical constraints (power flow equations $g(x, u) = 0$, here playing the
role of the state equations) and operational
constraints (line flow constraints $h(x, u) \leq 0$) on a given set of scenarios.
Each scenario is assigned given load parameters (energy demands) and potential
contingencies (line tripping). The values of the state $x$ depend on
the local scenario we are in, the state $x$ being the \emph{recourse} variable in our case.
As such, the problem has a partially separable structure as introduced
in Problem~\eqref{eq:originalproblem}, the control $u$ being shared across all scenarios.
We refer to \cite{capitanescu2011state} for the original presentation
of the stochastic optimal power flow problem
and to \cite{lubin2011scalable,chiang2014structured,kardovs2019two,kardovs2020structure}
for practical algorithms solving the stochastic optimal power flow problem
(some also focus on the multistage setting, which is not covered in this article).
For our benchmark, we look at reference instances provided by
MATPOWER~\cite{zimmerman2010matpower}, whose characteristics are detailed in Table~\ref{tab:results:instances}.
We recall that in our case, the size of the Schur complement matrix $\hat{K}_{uu}$
is given by the number of controls $n_u$.

\begin{table}[!ht]
  \centering
  \begin{tabular}{lrrrrr}
    \toprule
    Name & \#bus & \#lines & \#gen & $n_x$ & $n_u$ \\
    \midrule
    case118 &  118 & 186 & 54 & 181 & 107 \\
    case1354pegase & 1,354 & 1,991 & 260 & 2,447 & 519 \\
    case2869pegase & 2,869 & 4,582 & 510 & 5,227 & 1,019 \\
    case9241pegase & 9,241 & 16,049 & 1,445 & 17,036 & 2,889 \\
    \bottomrule
  \end{tabular}
  \caption{MATPOWER instances used in the benchmark. \label{tab:results:instances}}
\end{table}

\subsubsection{Implementation}
The algorithm has been implemented entirely in Julia 1.8.
The Schur complement approach has been developed as an extension
of the nonlinear optimization solver MadNLP~\cite{shin2021graph},
using CUDA-aware MPI as provided in \cite{byrne2021mpi}.
We have used the package ExaPF as a nonlinear modeler for the optimal power flow problem.
All the results presented here have been generated on the supercomputer
Polaris equipped with a total of 560 nodes, each node having with 1 CPU and 4 A100 GPUs.

\subsection{Assessment of the parallel implementation}
\label{sec:results:assessments}

\subsubsection{Assessing the performance of the parallel automatic differentation}
We first assess the performance of the parallel automatic differentiation
we introduced in \S\ref{sec:algo:ad} in a multi-GPU setting.
We compare the performance we obtain with a CPU implementation.
We use {\tt case1354pegase} as a representative instance, and
display the time spent in the automatic differentiation as we increase
the total number of scenarios $N$.
The results are displayed in Figure~\ref{fig:results:ad}.

We observe that the computation time depends linearly on the number of scenarios,
as expected. For $N=8$, it is not worthwhile dispatching the evaluation on multiple GPUs
as the problem is small enough to be evaluated on a single GPU. For $N=512$,
the evaluation time is 12.3s on the CPU, compared to
0.50, 0.41, 0.31, and 0.28s using 1, 2, 4 and 8 GPUs, respectively.
Hence, we get a 40x speed-up when evaluating the derivatives in a multi-GPU
setting, and it is not worthwhile to use more than 4 GPUs (one node).

\begin{figure}[!ht]
  \centering
  \includegraphics[width=12cm]{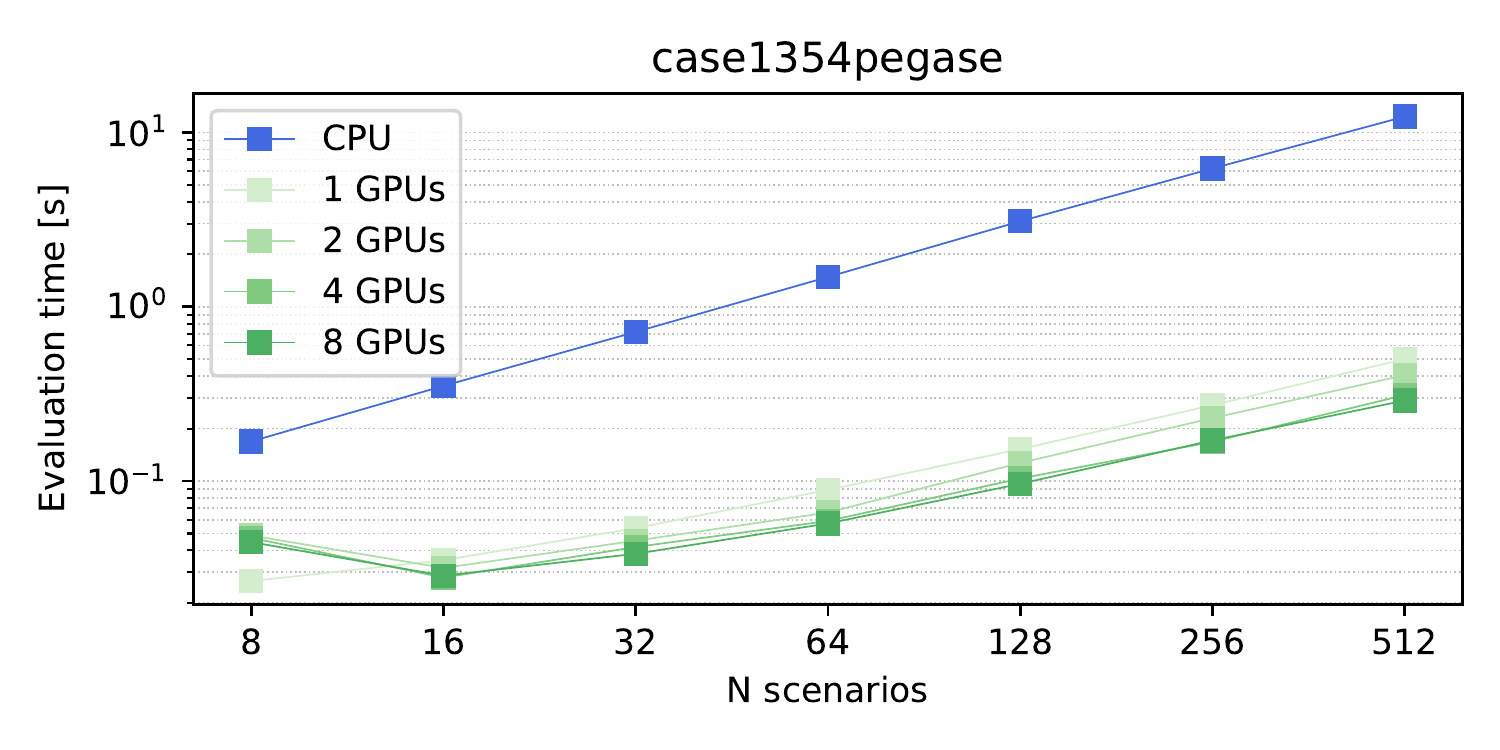}
  \caption{Time spent to evaluate the model and its derivatives
    with automatic differentiation. }
  \label{fig:results:ad}
\end{figure}

\subsubsection{Assessing the performance of the parallel KKT solver}
\label{sec:results:assessments:kkt}
We proceed to the same performance analysis to assess the performance of the
parallel KKT solver detailed in \S\ref{sec:algo:kkt}.
We compare the time required to evaluate the full solution of the KKT system afresh
(including reduction time, factorization time and backsolve time)
on {\tt case1354pegase} as we increase the number of scenarios $N$.
As a reference, we give the time taken by the sparse linear solvers
HSL MA27 (single-threaded) and HSL MA57 (multi-threaded).
The results are displayed in Figure~\ref{fig:results:kkt}.

On the left, we display the evolution of the time spent in the linear solver
as we increase the number of scenarios.
For $N=512$, we observe that we get a linear speed-up as we increase the
number of GPUs: using 8 GPUs, the parallel KKT solver is 40x faster
than using HSL MA27 on the CPU.
Interestingly, we observe that HSL MA57 is not faster than HSL MA27, despite
being multithreaded. This is consistent with the observation
made in \cite{tasseff2019exploring}, and illustrates the difficulty of parallelizing effectively the sparse
LDL factorization (Bunch-Kaufman).
On the right, we display a performance profile detailing
the time spent in MA27 and the parallel KKT solver
on {\tt case1354pegase} with $N=512$ scenarios.
We observe that most of the time in HSL MA27 is spent on factorizing
the sparse augmented KKT system~\eqref{eq:kkt:augmented}.
On the other side, the factorization of the dense reduced matrix $\hat{K}_{uu}$
is trivial using LAPACK on the GPU; the bottleneck in the parallel KKT solver
is the reduction algorithm itself. Fortunately, the reduction algorithm can
run in parallel: we get a linear speed-up as we increase the number of GPUs
used in the reduction algorithm.

\begin{figure}[!ht]
  \centering
  \includegraphics[width=\textwidth]{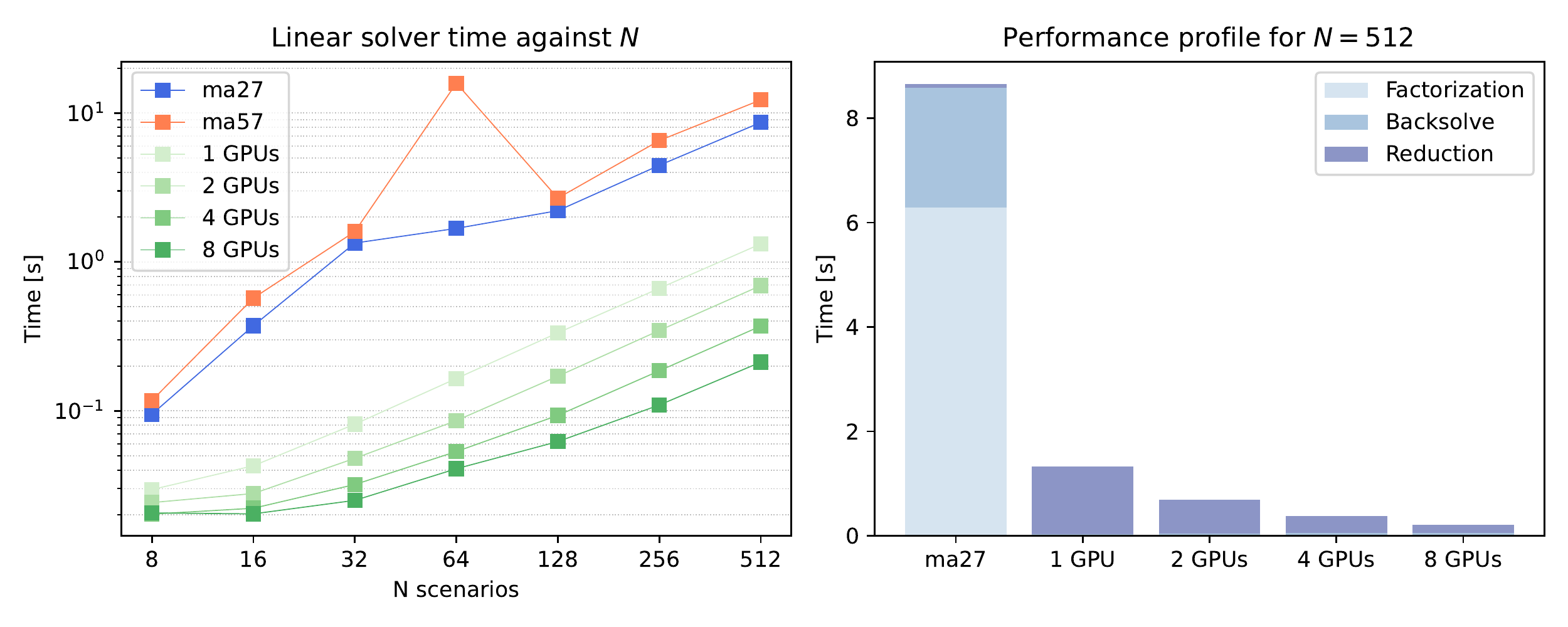}
  \caption{Time spent to solve the KKT system for {\tt case1354pegase}.}
  \label{fig:results:kkt}
\end{figure}

\subsubsection{Assessing the memory consumption}
We have observed in \S\ref{sec:algo:ad} that the total memory required
to store the duals is $\mathcal{O}((2n_x + n_d + n_b) \times M \times p)$,
with $M$ being the number of scenarios stored locally ($M = N$ on 1 GPU,
$M = N/2$ on 2 GPUs) and $p$ the number of tangents.
We display in Table~\ref{tab:results:memory} the memory taken by the automatic differentiation
backend and by the parallel KKT solver for {\tt case1354pegase} as we increase the number
of scenarios $N$.
We note that storing the duals is expensive in terms of memory,
with up to 10.9GB for $N=512$ on one GPU (as a reference, each NVIDIA A100 GPU
on Polaris has 40GB of memory available). By evaluating the model
on different processes with MPI, we can split the memory consumption
on the different GPUs we are using, leading to better use of the
resource at our disposal.

\begin{table}[!ht]
  \centering
  \begin{tabular}{lrr|rr}
    \toprule
  & \multicolumn{2}{c}{1 GPU}   & \multicolumn{2}{c}{2 GPUs}  \\
  \midrule
    $N$ & AD & KKT solver &AD & KKT solver \\
    \midrule
    8   & 171.1    & 92.3    & 85.5    & 48.1    \\
    16  & 342.2    & 181.5   & 171.1   & 93.1    \\
    32  & 684.3    & 360.0   & 342.2   & 183.2   \\
    64  & 1,368.7  & 716.8   & 684.3   & 363.2   \\
    128 & 2,737.3  & 1,430.5  & 1,368.7 & 723.4   \\
    256 & 5,474.7  & 2,858.0  & 2,737.3 & 1,443.6 \\
    512 & 10,949.3 & 5,712.8 & 5,474.7 & 2,884.1 \\
    \bottomrule
  \end{tabular}
  \caption{Memory consumption in MB}
  \label{tab:results:memory}
\end{table}

\subsection{Parallel solution of the block-structured OPF problem}
\label{sec:results:blockopf}
We analyze the parallel performance of our implementation
on block-structured OPF problems.

\subsubsection{Assessing the parallel performance w.r.t. the number of scenarios}
First, we are interested in the scaling of the parallel algorithm in relation
to the total number of scenarios $N$. We consider the {\tt case118} instance,
and increase the number of scenarios $N$ from 8 up to 2,048.
For each $N$, we solve the block-structured
OPF problem with MadNLP using our parallel KKT solver, and we compare with the performance
we obtained with HSL MA27.
The results are displayed in Figure~\ref{fig:results:opf_scenarios}.
We observe that the solver HSL MA27 is initially faster than our parallel KKT solver,
as the problem is too small to benefit from parallelism. However, as
soon as $N \geq 16$ the parallel KKT solver becomes competitive with HSL MA27.
The relative performance is improving as we increase the number of scenarios $N$:
for $N=512$, we get a 68x speed-up when using 8 GPUs, compared to the reference HSL MA27
(10.4s versus 712s).
Interestingly, using 2 nodes (=8 GPUs) does not lead to any speed-up compared to
a single node (=4 GPUs)
if $N \leq 256$; this setting is attractive only when the size of the problem becomes
sufficiently large ($N \geq 1024$) to compensate for the additional memory exchange.

\begin{figure}[!ht]
  \centering
  \includegraphics[width=12cm]{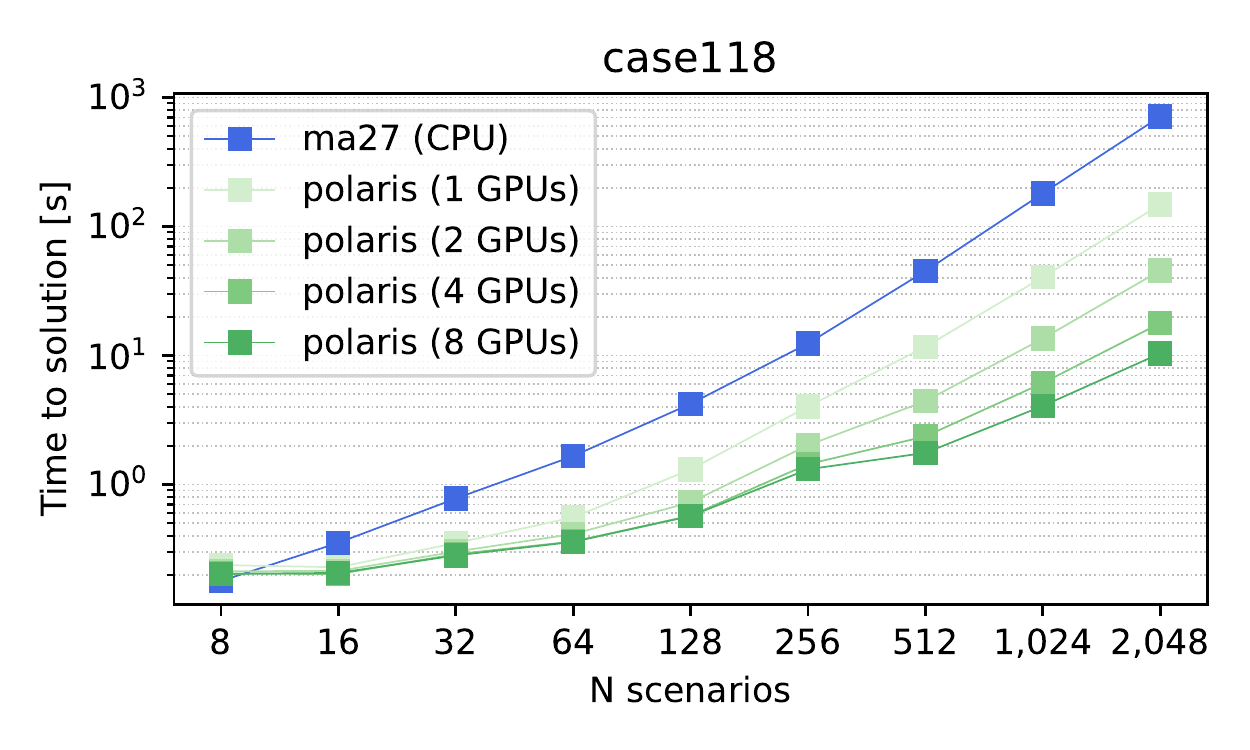}
  \caption{Time to solve the block-structured OPF problem {\tt case118}
  as a function of the number of scenarios $N$.}
  \label{fig:results:opf_scenarios}
\end{figure}

\subsubsection{Assessing the parallel performance w.r.t. the size of the problem}
Second, we increase the size of the problems.
We set a fixed number of scenarios $N = 8$, and look at the time to solution
for {\tt case1354pegase}, {\tt case2869pegase} and {\tt case9241pegase}.
We detail the respective dimension of each problem in Table~\ref{tab:results:8scens}.
We display the results in Figure~\ref{fig:results:opf_cases},
and give the detailed benchmark in Table~\ref{tab:results:bench}.
On the left (a), we display the total time required to find the solution
of the three instances as a function of the number of GPUs;
on the right (b), we show the performance profile
associated to {\tt case9241pegase}.
In (a), we observe that overall the parallel algorithm is faster than
the CPU implementation. The parallel algorithm
scales well as we increase the number of GPUs we are using,
the parallel algorithm being 35x faster than the reference when using
8 GPUs to solve {\tt case9241pegase}.
In (b), we detail the time spent in the different operations
for {\tt case9241pegase}: the time spent to factorize the Schur complement
with Lapack (using {\tt cusolve}) is constant as the size of the Schur complement
remains the same as we increase the number of GPUs. We observe that the time
spent in the AD decreases linearly with the number of GPUs exploited, but
the relative time spent in AD is negligible (less than 5\% of the total time).
Most of the time is spent in the parallel reduction,
as discussed earlier in \S\ref{sec:results:assessments:kkt}.

\begin{table}[!ht]
  \centering
  \begin{tabular}{lcrrrr}
    \toprule
    & $N$ & {\tt nvar} & {\tt ncon} & $\hat{K}_{uu}$ (mb) \\
    \midrule
    1354pegase & 8 & 20,095 & 53,520 & 2.1 \\
    2869pegase & 8 & 42,835 & 119,216 & 7.9 \\
    9241pegase & 8 & 139,177 & 404,640 & 63.7 \\
    \midrule
    1354pegase & 512 & 1,253,383 & 4,425,280 & 2.1 \\
    \bottomrule
  \end{tabular}
  \caption{Dimension of the instances we have used in our benchmark.\label{tab:results:8scens}}
\end{table}

\begin{figure}[!ht]
  \centering
  \includegraphics[width=.9\textwidth]{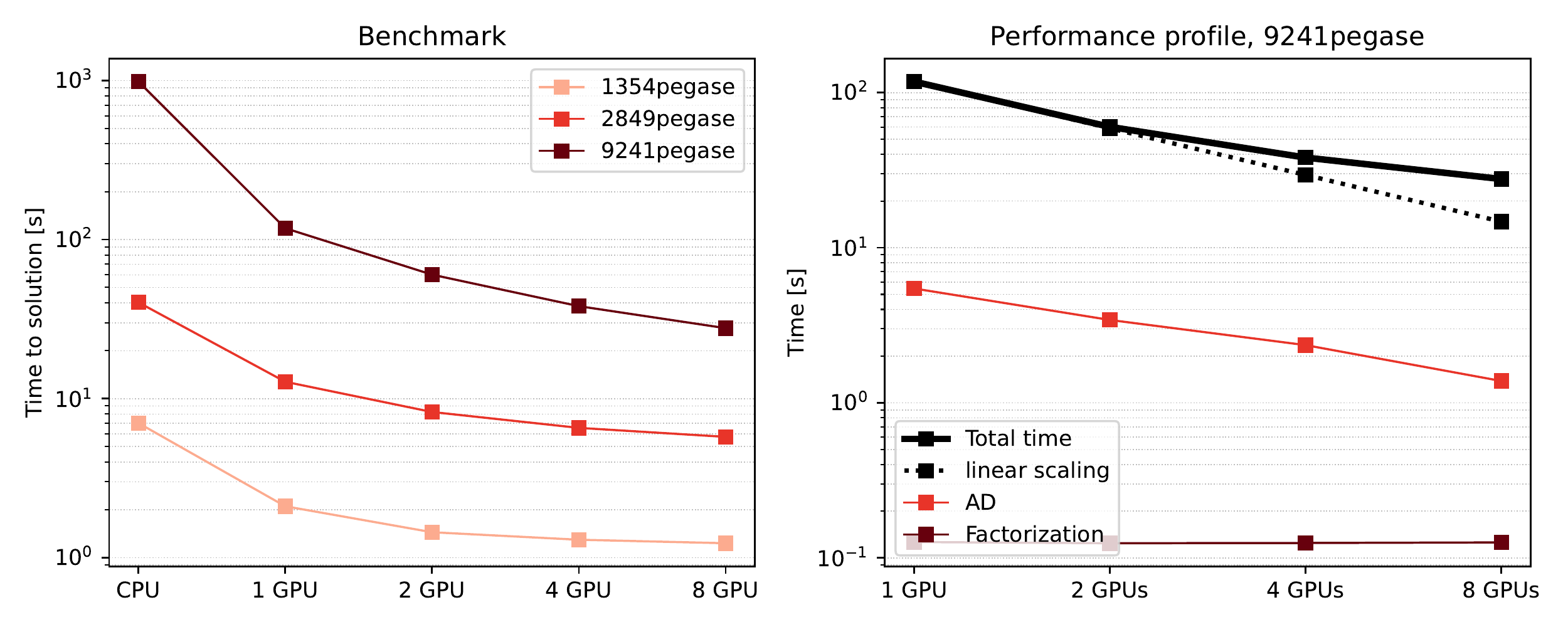}
  \caption{For a fixed number of scenarios $N=8$,
    (a) total time spent solving the block-OPF {\tt case1354pegase},
    {\tt case2869pegase} and {\tt case9241pegase} with MadNLP
    (b) performance profile for {\tt case9241pegase} with varying number of GPUs.
  }
  \label{fig:results:opf_cases}
\end{figure}

\begin{table}[!ht]
  \centering
  \resizebox{.95\textwidth}{!}{
    \begin{tabular}{lrrrr|rrrr|rrrr}
    \toprule
        &
    \multicolumn{4}{c}{\tt 1354pegase} &
    \multicolumn{4}{c}{\tt 2869pegase} &
    \multicolumn{4}{c}{\tt 9241pegase} \\
    \midrule
          &
  \#it & AD  & KKT  & {\bf Tot.}  &
  \#it & AD  & KKT  & {\bf Tot.}  &
  \#it & AD  & KKT  & {\bf Tot.}  \\
    \midrule
    CPU &
    44 & 2.6 & 4.2 & {\bf 7.0}  &
    77 & 11.9 & 27.4 & {\bf 40.3} &
    136 & 205.6 & 771.8 & {\bf 984.1} \\
    1 GPU &
    44 & 0.3 & 1.8 & {\bf 2.1}&
    93 & 1.1 & 11.7 & {\bf 12.8}&
    98 & 5.5 & 112.3 & {\bf 117.8}\\
    2 GPUs &
    44 & 0.3 & 1.1 & {\bf 1.4}&
    93 & 0.8 & 7.4 & {\bf 8.2}&
    98 & 3.4 & 56.8 & {\bf 60.2}\\
    4 GPUs &
    44 & 0.3 & 1.0 & {\bf 1.3} &
    93 & 0.8 & 5.7 & {\bf 6.5} &
    98 & 2.3 & 35.8 & {\bf 38.1}\\
    8 GPUs &
    44 & 0.2 & 1.0 & {\bf 1.2} &
    93 & 0.6 & 5.1 & {\bf 5.7} &
    98 & 1.4 & 26.4 & {\bf 27.7}\\
    \bottomrule
  \end{tabular}
  }
  \caption{Detailed results \label{tab:results:bench}}
\end{table}

\subsubsection{Assessing the parallel performance on a very large-scale instance}
We finish our numerical experiments by solving a very large-scale instance:
{\tt case1354pegase} with $N=512$ scenarios. The dimension of the resulting
optimization problem is displayed in Table~\ref{tab:results:8scens}: the problem
has more than 1 million variables, and 4 millions constraints.
We solve this instance on resp. 1 node, 2, 4 and 8 nodes (resp. 4, 8, 16 and 32 GPUs).
The results are displayed in Figure~\ref{fig:results:opf_large_scale}.
We observe that the scaling is almost perfect when we use 2 nodes (8 GPUs) instead of a single node
(4 GPUs) but we do not observe the same behavior when we increase the number of nodes to 4 and 8.
On that instance, the gain we get when using 8 nodes (32 GPUs) is marginal compared to when
using 4 nodes (16 GPUs): the solving time only decreases from 67s to 58s.
This corroborate our observations: it is better to pack all the computation on
a single node to use four A100 GPUs connected together via unified memory
(NVLINK has a transfer rate of 600GB/s). When
we have to use more than 2 nodes, the memory transfers are more involved as they
have to pass through the network of the supercomputer.

\begin{figure}[!ht]
  \centering
  \includegraphics[width=12cm]{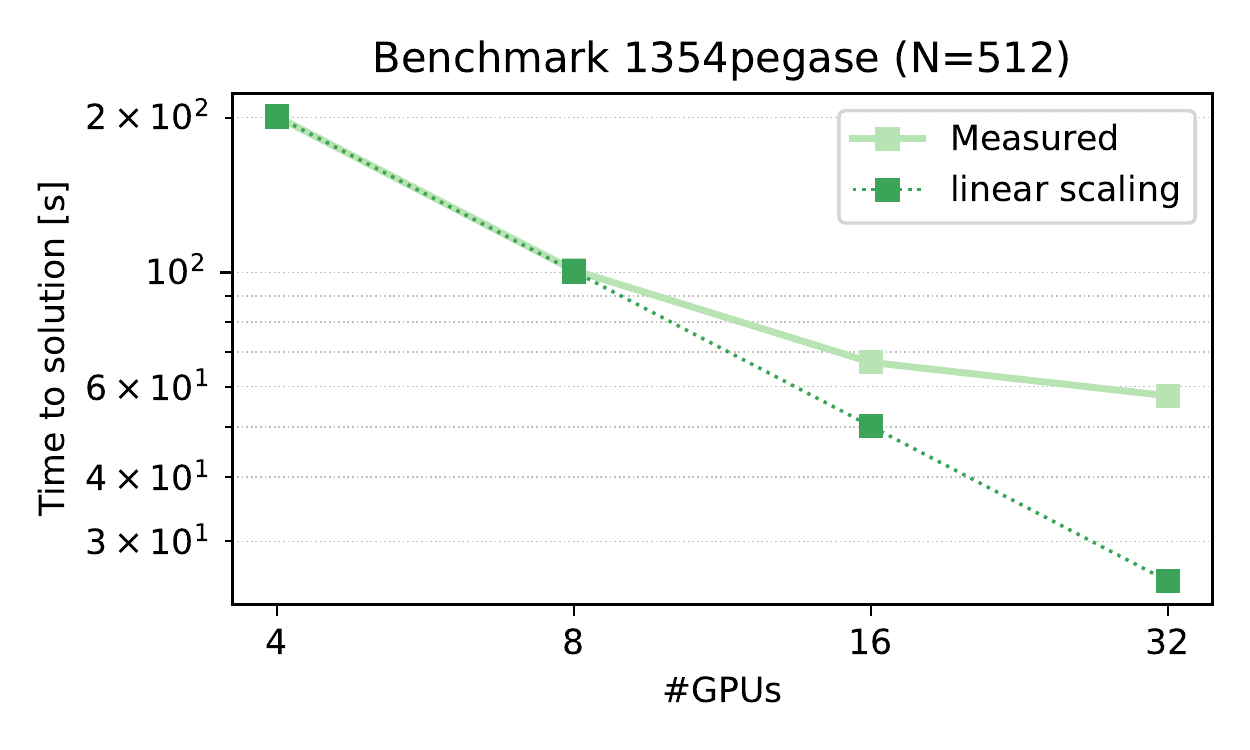}
  \caption{Solving {\tt case1354pegase} with $N=512$
    \label{fig:results:opf_large_scale}
  }
\end{figure}

\section{Conclusion}

We show promising results for leveraging massively parallel SIMD architectures
like GPUs for block-structured nonlinear programs. The parallelism is applied to
both the derivative evaluation and the solution of the KKT linear system. The
main operation in the KKT algorithm is the assembling of the Schur complement,
the factorization of the dense Schur complement being fast to carry on the GPU.

At all levels, the method benefits significantly from the massive parallelism,
achieving a speedup of around 40 for the derivatives compared to a
sequential CPU implementation. The speedup is very application dependent, not
least on the Hessian coloring and the problem's structure. The assembling
of the Schur complement is
bottlenecked by a distributed reduction operation bound by the
interconnect's latency and throughput between GPUs.
Current, so-called {\it super nodes} with multiple GPUs connected via fast networks like NVLINK greatly
accelerate this operation. Lastly, our method is limited by the memory capacity
of the GPU accelerators as it grows linearly with the number of problem blocks.
In the context of ACOPF we are confident that upcoming GPUs will provide enough
memory to solve a large number of scenarios in parallel, even for the largest
grid instances (e.g., Eastern Interconnection with 70,000 nodes).

With the upcoming release of the Aurora supercomputer, these SIMD architectures will
allow new science in regimes that were impossible with previous CPU architectures.

\section*{Acknowledgment}
This material was based upon work
supported by the U.S. Department of Energy, Office of Science,
Office of Advanced Scientific Computing Research (ASCR) under
Contract DE-AC02-06CH11347 and by NSF
through award CNS-1545046.
The authors gratefully acknowledge the funding support from the Applied Mathematics Program within the U.S. Department of Energy’s (DOE) Office of Advanced Scientific Computing Research (ASCR) as part of the project ExaSGD. This research used resources of the Argonne Leadership Computing Facility, which is a DOE Office of Science User Facility supported under Contract DE-AC02-06CH11357.

\vspace{0.1cm}
\begin{flushright}
  \scriptsize \framebox{\parbox{2.5in}{Government License: The
      submitted manuscript has been created by UChicago Argonne,
      LLC, Operator of Argonne National Laboratory (``Argonne").
      Argonne, a U.S. Department of Energy Office of Science
      laboratory, is operated under Contract
      No. DE-AC02-06CH11357.  The U.S. Government retains for
      itself, and others acting on its behalf, a paid-up
      nonexclusive, irrevocable worldwide license in said
      article to reproduce, prepare derivative works, distribute
      copies to the public, and perform publicly and display
      publicly, by or on behalf of the Government. The Department of Energy will provide public access to these results of federally sponsored research in accordance with the DOE Public Access Plan. http://energy.gov/downloads/doe-public-access-plan. }}
  \normalsize
\end{flushright}

\end{document}